\newtheorem{theorem}{Theorem}
\newtheorem{proposition}[theorem]{Proposition}
\newtheorem{corollary}[theorem]{Corollary}
\newtheorem{lemma}[theorem]{Lemma}
\newtheorem*{theorem2}{Theorem 2}
\theoremstyle{definition}
\newtheorem{definition}{Definition}
\newtheorem{problem}{Problem}
\newtheorem{example}{Example}
\newcommand{\gL}{g^\mathcal{L}}
\newcommand{\gR}{g^\mathcal{R}}
\newcommand{\GL}{G^\mathcal{L}}
\newcommand{\GR}{G^\mathcal{R}}
\renewcommand{\L}{\ensuremath {\mathrm{L}}}
\newcommand{\R}{\ensuremath{\mathrm{R}}}
\newcommand{\N}{\ensuremath {\mathrm{N}}}
\renewcommand{\P}{\ensuremath{\mathrm{P}}}
\newcommand{\bl}{{\,\bullet}}
\newcommand{\wh}{{\,\circ}}
\newcommand{\aw}{\ensuremath{\mathrm{aw}}}
\newcommand{\cg}[2]{\{#1\mid#2\}}
\def\cgstar{\mathord{\ast}}
\def\cgup{\mathord{\uparrow}}
\def\up{\mathord{\uparrow}}
\def\star{\mathord{\ast}}
\def\cgdown{\mathord{\downarrow}}
\def\bip{{\sc bipass }}
\def\ub{{unit-bypass }}
\def\ubp{{unit-bypass}}
\def\nb{{neigbor-bypass }}
\renewcommand{\ge}{\geqslant}
\newcommand{\bipass}{{\sc bipass}}
\newcommand\upstarr{\mathrel{\rotatebox{90}{$\rightarrowtail$}}}
\newcommand\downstarr{\mathrel{\rotatebox{270}{$\rightarrowtail$}}}
\newcommand\upstar{\mkern0.5mu\lower0.75ex\hbox{$\scriptstyle\upstarr$}}
\newcommand\downstar{\mkern0.5mu\raise1.98ex\hbox{$\scriptstyle\downstarr$}}
\tikzset{radiation/.style={{decorate,decoration={expanding waves,angle=90,segment length=4pt}}},
        amoebaL/.pic={
        		code={\tikzset{scale=5/10}
        		\draw[scale=0.75,fill=gray, rotate=-15] (0,0) ellipse (1 and {2});
  		\draw[fill=white] (.53,0.5) ellipse (.3 and {.15});
  		}
  	}
}
\tikzset{radiation/.style={{decorate,decoration={expanding waves,angle=90,segment length=4pt}}},
amoebaR/.pic={
        		code={\tikzset{scale=5/10}
        		\draw[scale=0.75, rotate=15] (0,0) ellipse (1 and {2});
  		\draw[fill=gray] (-0.55,0.5) ellipse (.25 and {.13});
  		}
  	}
}
\begin{document}

\title{Atomic weights and the combinatorial game of {\sc bipass}}
\author{Urban Larsson\footnote{National University of Singapore, urban031@gmail.com, partly supported by the Killam Trusts.} \and Richard J. Nowakowski\footnote{Dalhousie University, r.nowakowski@dal.ca}}
\maketitle

\begin{abstract}
We define an {\em all-small} ruleset, {\sc bipass}, within the framework of normal-play combinatorial games. A game is played on finite strips of black and white stones. Stones of different colors are swapped provided they do not bypass one of their own kind. We find a surjective function from the strips to integer {\em atomic weights} (Berlekamp, Conway and Guy 1982) that measures the number of units in all-small games. This result provides explicit winning strategies for many games, and in cases where it does not, it gives narrow bounds for the canonical form game values. We prove that the game value $*2$ does not appear as a disjunctive sum of {\sc bipass}. Moreover, we find game values for some parametrized families of games, including an infinite number of strips of value $*$.
\end{abstract}

\section{Introduction}
A bi-collective of one-directional micro organisms, consisting of a black tribe and a white tribe, live in close proximity, and they take turns  moving. 
A collective is divided into a finite number of one-dimensional units (strips), and this number cannot increase, because units cannot split. See Figure~\ref{fig:collective}.

\begin{figure}[ht!]
\begin{center}
\begin{tikzpicture}[scale=1, every node/.style={transform shape}, line width=1pt]
    \path (-2.6,0) pic {amoebaL};
    \path (-1.3,0) pic {amoebaL};
    \path (0,0) pic {amoebaL};
    \path (1.3,0) pic {amoebaR};
    \path (2.6,0) pic {amoebaL};
    \path (3.9,0) pic {amoebaR};
\end{tikzpicture}\vspace{4 mm}

\begin{tikzpicture}[scale=1, every node/.style={transform shape}, line width=1pt]
    \path (-1.3,0) pic {amoebaL};
    \path (0,0) pic {amoebaL};
    \path (1.3,0) pic {amoebaR};
    \path (2.6,0) pic {amoebaR};
    \path (3.9,0) pic {amoebaL};
\end{tikzpicture}\vspace{4 mm}

\begin{tikzpicture}[scale=1, every node/.style={transform shape}, line width=1pt]
    \path (0,0) pic {amoebaL};
    \path (1.3,0) pic {amoebaR};
    \path (2.6,0) pic {amoebaL};
    \path (3.9,0) pic {amoebaR};
\end{tikzpicture}\vspace{4 mm}

\begin{tikzpicture}[scale=1, every node/.style={transform shape}, line width=1pt]
    \path (5.2,0) pic {amoebaR};
    \path (2.6,0) pic {amoebaL};
    \path (3.9,0) pic {amoebaR};
\end{tikzpicture}\caption{A bi-collective (a.k.a. disjunctive sum) of 4 amoebae units. All but one amoeba is alive.}\label{fig:collective}
\end{center}
\end{figure}
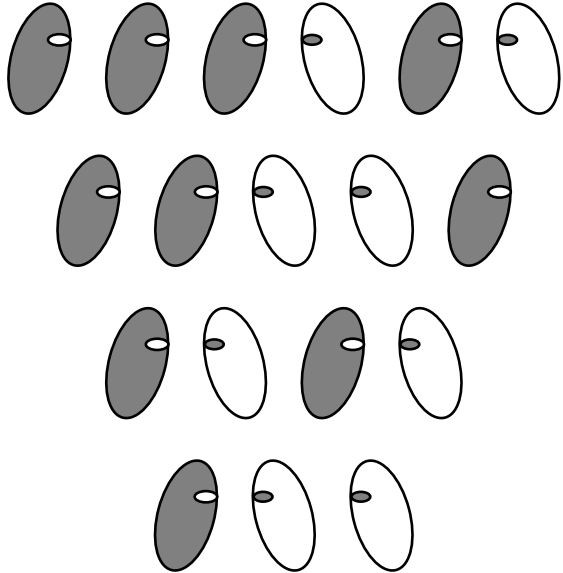

The black tribe moves by letting one of its members crawl rightwards across a number of white amoebae, while settling in the spot of a white amoeba, and thus pushing each bypassed amoeba one step to the left, whereas the white tribe moves by letting one of its members crawl leftwards across a bunch of black amoebae, while shifting the position of each bypassed amoeba one step to the right. Amoebae cannot bypass their own kind. When an amoeba reaches end of line, it cannot be played, and thus dies (of boredom), as indicated in the second line of Figure~\ref{fig:collective}. Unless, by moving, it bypassed all remaining amoebae, in which case its tribe will be rewarded eternal life. 
That is, a tribe that, at its turn, cannot move, because none of its members survived, loses this `evolutionary' combinatorial game.

\begin{figure}[ht]
\begin{center}
\begin{tikzpicture}[scale=0.15, line width=1pt]
    \path (-8,0) pic {amoebaL};
    \path (0,0) pic {amoebaL};
    \path (8,0) pic {amoebaR};
    \path (16,0) pic {amoebaL};
    \path (24,0) pic {amoebaR};
    \node[left] (A) at (34,0) {$\longrightarrow$};
\end{tikzpicture}
\begin{tikzpicture}[scale=0.15, line width=1pt]
    \path (0,0) pic {amoebaR};
    \path (8,0) pic {amoebaL};
    \path (16,0) pic {amoebaL};
    \path (24,0) pic {amoebaL};
    \path (32,0) pic {amoebaR};
    \draw[thick, red] (-5,5) -- (5,-5);
    \draw[thick, red] (-5,-5) -- (5,5);
\end{tikzpicture}\caption{The middle amoeba crawled to the left end. When an amoeba does not face any opponent, even at a far distance, it gets removed, because it cannot be used in the game by either player.}\label{fig:amorem}
\end{center}
\end{figure}
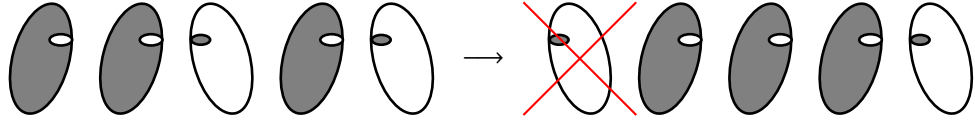

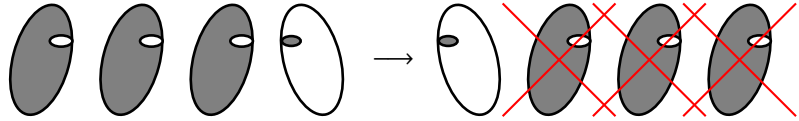
\begin{figure}[ht]
\begin{center}
\begin{tikzpicture}[scale=0.15, line width=1pt]
    \path (0,0) pic {amoebaL};
    \path (8,0) pic {amoebaL};
    \path (16,0) pic {amoebaL};
    \path (24,0) pic {amoebaR};
    \node[left] (A) at (34,0) {$\longrightarrow$};
\end{tikzpicture}
\begin{tikzpicture}[scale=0.15, line width=1pt]
    \path (0,0) pic {amoebaR};
    \path (8,0) pic {amoebaL};
    \path (16,0) pic {amoebaL};
    \path (24,0) pic {amoebaL};
    \draw[thick, red] (3,5) -- (13,-5);
    \draw[thick, red] (3,-5) -- (13,5);
    \draw[thick, red] (11,5) -- (21,-5);
    \draw[thick, red] (11,-5) -- (21,5);
    \draw[thick, red] (19,5) -- (29,-5);
    \draw[thick, red] (19,-5) -- (29,5);
\end{tikzpicture}\caption{By moving, the single white amoebae bypassed all remaining amoebae, and will be celebrated as a hero by its resurrected tribe.}\label{fig:eternal}
\end{center}
\end{figure}

\textsc{bipass} is a partizan combinatorial game played with several strips of stones.  The stones are either black ($\!\bl$) or  white ($\!\wh$). A move is to interchange a black and a white stone with the constraint that the black stone lies to the left of the  white, and that 
\begin{itemize}
\item Left cannot interchange them if there is a black stone between them; 
\item Right cannot interchange them if there is a white stone between them.
\end{itemize}
Equivalently, one can think of the black stones as Left's pieces, and by moving Left jumps a number of white stones immediately to the right of the black stone, by  shifting the selected white stones one step to the left, in order to fit in her jumped black stone. This description shows that {\sc bipass} is the game in the first paragraph. 

\begin{example}\label{ex:singleline}
Here is an example of single line play, where the black tribe starts, and wins in their second move: 
$$\bl\wh\wh\wh\bl\wh \; \stackrel{\bl}{\longrightarrow}\; (\!\wh\wh)\bl\wh\bl\wh \; \stackrel{\wh}{\longrightarrow}\;  \bl\wh\wh(\!\bl)\; \stackrel{\bl}{\longrightarrow}\; (\!\wh\wh)\,\widehat{\!\bl}$$
The brackets contain recently deceased amoebae, and $\widehat{\!\bl}$ indicates that the black tribe has been rewarded eternal life. 
\end{example}

We summarize the options of a position in the usual way for combinatorial games; thus the starting game in  Example~\ref{ex:singleline} has 6 options and is defined recursively by $$\bl\wh\wh\wh\bl\wh=\cg{\bl \wh\bl \wh,\bl \wh \wh\bl \wh,\bl \bl \wh,\bl\wh\wh\wh}{\bl\wh\wh\bl\wh,\bl\wh\wh\wh\wh\,}$$ 

Curiously enough Left, who plays black, wants many of Right's white pieces on the board, and an intuition is that Right's pieces correspond to Left's `game board'. Therefore she gains move advantages if we increase the number of black pieces (and vice versa). In the main part of the paper, we study the normal-play winning convention: i.e. a player who cannot move loses, and in this convention it is never bad to have more move options. The final section (Section~\ref{sec:mis}) will mention briefly the mis\`ere play convention, where a player who cannot move wins, but even within this class, it is not bad to have more options, provided that there is at least one.

The benefit of many move options in {\sc bipass}, is highlighted in the next example.
\begin{example}[Black Headed Larvae]
A \bip strip of one black stone, followed by seven single white stones to the right,  
$\bl\wh\wh\wh\wh\wh\wh\wh$, gives 7 options for Left, but only one option for Right. Moreover, Left has the empty game 0 as an option, but Right can only reduce the number of white stones one by one, and it would take him 7 consecutive moves to reach 0. Thus, in a disjunctive sum of games (similar to Figure~\ref{fig:collective}), Left can afford to wait 6 Right moves on this strip, because at this point, this strip will be $\bl\wh = \cgstar$, and only now she may want to react by eliminating the strip, for example, if this is the last remaining component in the original disjunctive sum.
\end{example}

These type of \bip strips have enough special properties that they do deserve a name. We call a strip of the form $\bl\wh\cdots\!\wh$ (with at least 1 white stone) by a {\em black headed larvae}. Similarly, a strip of the form $\bl\cdots\! \bl\wh$ (with at least 1 black stone) is a {\em white headed larvae}. The first picture in Figure~\ref{fig:eternal} is a white headed larvae. As, we will see, larvae have a couple of special properties that distinguishes them from other type of games. For example, they contain no {\em unit-bypass} for one of the players, a central concept to \bip (see Definition~\ref{def:unitbyp}).

\subsection{Context and concepts}
Regarded as a ruleset, \bip may be played in isolation, exclusively together with other \bip strips, as in the example in the first paragraph, or it may be played in the general context of normal-play games, i.e. mixed with other rulesets. In the first case, one can deduce various strategic observations by looking merely at the rules of \bipass, but in the more general context one must adhere to standard theory (i.e. game values/canonical forms) on normal-play games, as developed by Conway \cite{C}, Berlekamp, Conway and Guy \cite{WW}, and subsequently by Nowakowski, Wolfe and Albert \cite{LiP}, and most recently by Siegel \cite{S}. Although we seek to be self-sufficient, we invite readers new to the subject to study standard notation and terminology, as presented in those books. In particular, the atomic weight theory will come in handy, and we will review the basics from this theory that applies to this paper in Section~\ref{sec:AW}. 

In \bipass, if Left has a move, there is a black stone that can be swapped with a white stone. The stone neighboring this black stone, to the right, is a white stone, and hence Right also has a move. Similarly, if Right has a move then so does Left. Therefore, merely parity considerations determine the winner, and we saw this already in the example in the first paragraph. Therefore, \bip is an {\em all-small} game \cite[p. 229]{WW}, \cite[p. 101]{LiP}, \cite[p.83]{S}, where such games are called dicots]. All-small games are in sharp contrast with rulesets such as {\sc amazons} \cite{Amaz}, {\sc domineering} \cite{YB} and {\sc go} \cite{Go}, where a player wins by ``gaining territory'' in which the other player cannot move. In \bipass, neither player can gain such an advantage; in particular, the game values are all infinitesimals \cite[p. 36-37]{WW}, \cite[p.100]{LiP}, \cite[p.83]{S}.

The game values of \bip become complicated even for small positions (See final row in Table~\ref{tab:values} and Example~\ref{ex:compl}.). Atomic weight, abbreviated \aw, \cite[Ch. 8]{WW},\cite[p.197]{LiP}, \cite[p.151]{S}, is an efficient approximation for infinitesimals arising from all-small games. We use these to solve many multi-strip games, and moreover, atomic weight is an essential tool when playing a disjunctive sum of \bip and any other all-small games, such as {\sc clobber} \cite{AGNW},  {\sc cutthroat stars} \cite{LiP,MuNo},
{\sc hackenbush sprigs} \cite{MMN}, {\sc partizan
euclid} \cite{McKayN2013} and {\sc yellow-brown hackenbush} \cite{Berl}. 

The fundamental concepts of a normal play combinatorial game is the {\em outcome function}, the {\em disjunctive sum} of games,  and the {\em game value}.  The outcome of a game $G$, $o(G)$, is \L (\R) if Left (Right) wins independently of who starts, and it is N (P) if the curreNt (Previous) player wins. If the outcome is $X$, then we often say that the game is an $X$-position. The convention is that `Left wins' $>$ `Right wins'. Therefore the outcomes are partially ordered with $\L>\P>\R$ and $\L>\N>\R$ but $\P\cgfuzzy \N$. Let $G$ and $H$ be combinatorial games. The disjunctive sum operator `$+$' is defined, recursively, by the current player's options in the disjunctive sum $G+H$. If Left starts, then an option is of the form $G^L+H$ or $G+H^L$, where $G^L$ denotes a typical Left option in the game $G$, and similar for Right. This definition defines a partial order of normal play games: $G\ge H$ if $o(G+X)\ge o(H+X)$ for any normal play game $X$. A fundamental theorem of normal play games gives that $G\ge H$ if and only if Left wins the game $G+(-H)$ if Right starts, where $-H$ is the game where the players have (hereditarily) swapped positions in $H$. This implies that $G\equiv H$ if and only if $G-H$ is a P-position.\footnote{In this paper we designate the symbol `$\equiv$' to specify equivalence of games, whereas `$=$' has multiple uses.} Another fundamental theorem is that there is a unique game of smallest birthday (rank) in each equivalence class, which is referred to as the {\em game value}, of this class. 

Let us summarize the contribution of this paper.
\begin{itemize}
\item Section~\ref{sec:singlestrip} answers the question: ``Who wins a single \bip strip?'';
\item In Section~\ref{sec:canforms}, we study canonical forms of some parametrized \bip strips;
\item In Section~\ref{sec:main}, we demonstrate how a simple surjective rule assigns integer atomic weights to \bipass;
\item Section~\ref{sec:AW0} bounds harder instances of \bipass;
\item In Section~\ref{sec:imposs}, we show that $*2$ does not appear as a value in \bipass;
\item In Section~\ref{sec:mis}, we analyze mis\`ere \bipass.
\end{itemize}

\section{Atomic weight and $\Delta$-excess}
Much of the usefulness of atomic weights (\aw) is given in Theorem \ref{thm:AWprops}. Our main result is Theorem~\ref{thm:bipAW} which relates \bip and atomic weights. We will detail the definitions etc. in  Section~\ref{sec:AW}. In this section, we will begin to discuss how to use \aw\ in terms of \bipass. 
\begin{theorem}[Atomic Weight Properties, \cite{S}]\label{thm:AWprops} 
Let $g$ and $h$ be all-small games.  Then
\begin{enumerate}[(i)]
\item $\aw(g+h) = \aw(g) + \aw(h)$;
\item $\aw(-g)=-\aw(g)$;
\item If $\aw(g) \geqslant 1$, then $g\cggfuz 0$ (Left wins playing first);
\item  if $\aw(g)\geqslant 2$, then $g > 0$ (Left wins).
\end{enumerate}
\end{theorem}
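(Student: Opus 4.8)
The plan is to derive all four parts from the recursive definition of atomic weight (the \emph{atomic weight calculus}: a recursion that computes $\aw(g)$ from the atomic weights of the Left options and of the Right options of $g$, with an exceptional ``eccentric'' branch) together with its characterisation in terms of \emph{remote stars}; both are recalled in Section~\ref{sec:AW}, and for full details one would quote the relevant statements from \cite[Ch.~8]{WW} and \cite{S}. Part~(ii) is the easiest: the calculus produces $\aw(g)$ by a rule that commutes with simultaneously interchanging the two option sets and negating every entry, so since $(-g)^{\mathcal L}=-(g^{\mathcal R})$ and $(-g)^{\mathcal R}=-(g^{\mathcal L})$, a one-line induction on the birthday of $g$ yields $\aw(-g)=-\aw(g)$.

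For part~(i) I would first isolate the comparison lemma that underpins the whole theory: for an all-small game $x$ exactly one of the three statements ``$x>\cgstar k$ for all sufficiently large $k$'', ``$x<\cgstar k$ for all sufficiently large $k$'', ``$x\parallel\cgstar k$ for arbitrarily large $k$'' holds, and these correspond respectively to $\aw(x)\ge 1$, $\aw(x)\le -1$, $\aw(x)=0$; the graded form, $\aw(x)\ge n\iff x+(n-1)\cgdown$ is eventually greater than every remote star, is then established together with additivity by a simultaneous induction on birthdays (and on $n$). The inductive step for additivity checks that $\aw(g+h)$ and $\aw(g)+\aw(h)$ have the same remote-star type after shifting by $(n-1)\cgdown$, for every integer $n$; here the induction hypothesis lets one replace each option $g^{\mathcal L}+h,\ g+h^{\mathcal L},\dots$ of $g+h$ by the corresponding sum of atomic weights. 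I expect the genuine obstacle to be the eccentric branch of the calculus — the case in which $\aw(x)$ is not the value naively suggested by $\cg{\aw(x^{\mathcal L})}{\aw(x^{\mathcal R})}$ but an integer forced by the remote-star type of $x$ itself — since one must then verify that this exceptional behaviour is respected by sums; this is the delicate core of the ``Two-Ahead Rule'' argument, where I would concentrate the effort.

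Finally, (iii) and (iv) should follow from (i) and the comparison lemma by two short order computations, as follows. For (iii): if $\aw(g)\ge 1$ then $g>\cgstar k$ for all large $k$; were $g\le 0$ we would get $g-\cgstar k\le -\cgstar k=\cgstar k$, whence $0<g-\cgstar k\le\cgstar k$ and therefore $\cgstar k>0$, contradicting $\cgstar k\parallel 0$. Hence $g\not\le 0$, i.e.\ Left wins moving first. For (iv): using additivity and $\aw(\up)=1$, the hypothesis $\aw(g)\ge 2$ is equivalent to $\aw(g-\up)\ge 1$, so $g-\up>\cgstar k$, that is $g>\up+\cgstar k$, for all large $k$; since an easy induction on $k$ shows $\up+\cgstar k>0$ for every $k\ge 2$, transitivity gives $g>0$.
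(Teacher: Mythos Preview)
The paper does not prove this theorem at all: it is quoted with attribution to \cite{S} (and implicitly to \cite{WW}) as a known background result, and the paper supplies no argument of its own. So there is no ``paper's proof'' to compare your proposal against.

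That said, your sketch is essentially the standard route taken in \cite{WW} and \cite{S}: establish the remote-star trichotomy, derive additivity and the negation law from the atomic weight calculus by induction on birthday (handling the eccentric/integer branch with care), and then read off (iii) and (iv) via order arguments with $\cgstar k$ and $\cgup+\cgstar k$. Your arguments for (iii) and (iv) are clean and correct; the only place where real work hides is exactly where you flag it, the eccentric case in the proof of (i). If you were to flesh this out you would be reconstructing the textbook proof rather than doing anything the present paper intends --- here the theorem is simply being invoked.
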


In particular (iv) is the raison d'\^ etre for atomic weight, and it is popularly called ``the two-ahead-rule''. We will have plenty use for it.

Let $b(s)$ and $w(s)$, respectively, denote the number of black and white stones on a given strip $s$. 
Since we assume that all stones are alive, we have that $b(s) > 0$ if and only if $w(s) > 0$. Let $|s|$ be
the number of stones on $s$.

\begin{definition}[$\Delta$-excess]\label{def:Delta} 
Consider a \bip strip, where all pieces are alive. Then $\Delta(s) = w(s) - b(s)$ denotes the
excess of white stones on $s$. Let $g = g_1 + g_2 + \cdots + g_n$ be a sum of \bipass strips. Then $\Delta(g) = \sum \Delta(g_i)$. When the underlying game is understood, let $\Delta = \Delta(g)$.
\end{definition}

Table \ref{tab:values} gives the outcomes, values, $\Delta$-excesses and atomic weights for strips with up to 5 stones and $b(s)\leq w(s)$.

\begin{table}[htb]
\caption{Outcomes, values, excesses and atomic weights of positions up to 5 stones. As usual $\star=\cg{0}{0}$, $\up=\cg{0}{\star}$, $\cgdoubleup = \up +\up$, $\cgdoubleup\star = \cgdoubleup +\star$, and so forth.}
\begin{center}
\begin{tabular}{|c|c|l|l|c|}
Position&Outcome&Value&$\Delta$&\aw\\
\hline
$\bl\wh$&N&$\cgstar$&0&0\\
$\bl\wh\wh$&L&$\cgup$&1&1\\
$\bl\wh\wh\wh$&L&$\cgdoubleup\cgstar$&2&2\\
$\bl\wh\bl\wh$&N&$\cgstar$&0&0\\
$\bl\bl\wh\wh$&N&$\{\cgstar,\cgup\mid \cgstar,\cgdown\}$&0&0\\
$\bl\wh\wh\wh\wh$&L&$\cgtripleup$&3&3\\
$\bl\wh\wh\bl\wh$&L&$\cgup$&1&1\\
$\bl\wh\bl\wh\wh$&L&$\{\cgdoubleup\cgstar\mid\cgup,\{\cgstar,\cgup\mid \cgstar,\cgdown\}\}$&1&1\\
$\bl\bl\wh\wh\wh$&L&$\{0|\{\cgstar,\cgup\mid \cgstar,\cgdown\},\{\cgdoubleup\cgstar\mid\cgup,\{\cgstar,\cgup\mid \cgstar,\cgdown\}\}\}$&1&1\\
\hline
\end{tabular}
\end{center}
\label{tab:values}
\end{table}%

Note in the table that the $\Delta$-excesses coincide with the atomic weights. Our main theorem asserts that this readily generalizes.
 
\begin{theorem}\label{thm:bipAW}
Let $g$ be a disjunctive sum of  {\sc bipass} strips. Then $\aw(g)=\Delta(g)$.
\end{theorem}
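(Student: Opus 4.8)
The plan is to reduce to a single strip, and then prove the two inequalities $\aw(s)\le\Delta(s)$ and $\aw(s)\ge\Delta(s)$. Atomic weight is additive over disjunctive sums by Theorem~\ref{thm:AWprops}(i), and $\Delta$ is additive by Definition~\ref{def:Delta}, so it suffices to show $\aw(s)=\Delta(s)$ for a single {\sc bipass} strip $s$; moreover we may assume $s$ is reduced, $s=\bl\cdots\wh$, since leading white stones and trailing black stones are dead and deleting them changes neither the game value nor $\Delta$. Reading a strip backwards while interchanging the two colours again yields a reduced strip; this is the negative $-s$ (the roles of Left and Right are swapped), and the operation negates $\Delta$. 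Since $\aw(-s)=-\aw(s)$ by Theorem~\ref{thm:AWprops}(ii), it is enough to prove $\aw(s)\ge\Delta(s)$ for every strip, for then the same statement applied to $-s$ gives $\aw(s)=-\aw(-s)\le-\Delta(-s)=\Delta(s)$.

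I would run the induction on the number $\phi(s)$ of pairs $(i,j)$, $i<j$, with a black stone at position $i$ and a white stone at position $j$: every {\sc bipass} move strictly decreases $\phi$, and $\phi(s)=0$ precisely for the empty strip, the base case. To be able to feed the hypothesis into the atomic weight calculus I would carry the full equality $\aw(t)=\Delta(t)$ through the induction. The first ingredient of the inductive step is the observation, read off directly from the rules, that every Left option has $\Delta(s^L)\le\Delta(s)+1$ and every Right option has $\Delta(s^R)\ge\Delta(s)-1$: when Left moves a black stone that is not the leftmost one, only that stone can become dead, which raises $\Delta$ by $1$, whereas moving the leftmost black stone kills white stones and so lowers $\Delta$ (and symmetrically for Right). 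Substituting $\aw(s^L)=\Delta(s^L)$, $\aw(s^R)=\Delta(s^R)$, the naive atomic weight
\[
\{\aw(s^L)-2\mid\aw(s^R)+2\}=\{\Delta(s^L)-2\mid\Delta(s^R)+2\}
\]
is a game whose Left options are all $\le\Delta(s)-1$ and whose Right options are all $\ge\Delta(s)+1$; since that gap is at least $2$, its value is an integer for every non-empty $s$, so the atomic weight calculus always lands in its eccentric branch and $\aw(s)$, which is therefore an integer, must be determined by comparison with remote stars. By the standard remote-star criterion (see~\cite{S}), $\aw(s)\ge\Delta(s)$ is then equivalent to the single positivity statement
\[
s+(\Delta(s)-1)\cdot\cgdown+\cgstar K>0\qquad\text{for all large }K,
\]
where $(\Delta(s)-1)\cdot\cgdown$ means the obvious multiple of $\cgdown$, or of $\up$ when $\Delta(s)\le0$.

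The heart of the proof, and the step I expect to be the main obstacle, is verifying this positivity by an explicit Left strategy. The idea is that Left answers a Right move in $\cgstar K$ by moving to a smaller remote star, keeps a spare $\cgdown$-summand in reserve to respond in, and, when compelled to play on $s$, picks an option $s^L$ with $\Delta(s^L)\ge\Delta(s)$; the reached position is then, by the inductive hypothesis for $s^L$, at least as favourable to Left as one of the form $s^L+(\Delta(s^L)-1)\cdot\cgdown+\cgstar K'$, with any surplus $\up$'s on Left's side, so positivity is preserved. The obstruction is that certain strips leave Left no $\Delta$-non-decreasing reply: these are exactly the black headed larvae $\bl\wh\cdots\wh$ (and, dually, the white headed larvae), where $\Delta$ collapses to $0$ after a single move. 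I would therefore treat the larvae separately, using their explicit canonical forms from Section~\ref{sec:canforms} --- $\bl\wh^{m}$ equals $(m-1)\cdot\up$ for even $m$ and $(m-1)\cdot\up+\cgstar$ for odd $m$, so that $\aw(\bl\wh^{m})=m-1=\Delta(\bl\wh^{m})$, in accordance with Table~\ref{tab:values} --- and then show that every other strip does offer each player a $\Delta$-non-decreasing option, which sustains the induction. The delicate part is the bookkeeping of the spare $\up$ and $\cgdown$ summands across the simultaneous play on the three components, and in particular confirming that a large $\Delta$-collapse forced by the opponent on $s$ is always survivable.
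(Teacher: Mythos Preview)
Your reduction to a single strip and the observation that the naive atomic weight $\{\Delta(s^L)-2\mid\Delta(s^R)+2\}$ is always an integer are exactly what the paper does; your bounds $\Delta(s^L)\le\Delta(s)+1$ and $\Delta(s^R)\ge\Delta(s)-1$ are Lemma~\ref{lem:oneunit}. Where you diverge is in how you cash in the ``eccentric case'' of the atomic-weight calculus. You convert $\aw(s)\ge\Delta(s)$ into the positivity of the three-component game $s+(\Delta(s)-1)\cdot\cgdown+\cgstar K$ and then try to exhibit a Left strategy there. The paper instead applies Theorem~\ref{thm:AWcon} in its literal form, which only asks for the sign of $s$ against $\cgfarstar$. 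Once the naive $G$ is known to be an integer, if $s>\cgfarstar$ then $\aw(s)=\max\{n:n\cglfuz G^R\text{ for every }G^R\}$; since $\Delta(s)>0$ forces $w(s)\ge 2$, Right always has a unit-bypass, the least $G^R$ is exactly $\Delta(s)+1$, and $\aw(s)=\Delta(s)$ drops out immediately. For $\Delta(s)=0$ one shows $s\cgfuzzy\cgfarstar$ instead. So the only game the paper ever has to play is $s+\cgfarstar$, with no $\up/\cgdown$ summands, and the strategy there (trade unit-bypasses until reaching $\bl\wh+\cgstar$) is short. Your route is sound, but it multiplies the bookkeeping you yourself flag as delicate, and it makes you outsource the larvae to Section~\ref{sec:canforms}; the paper absorbs the larvae as the sub-case $b(s)=1$ of the same $s>\cgfarstar$ verification, with no appeal to canonical forms.

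Two small remarks. First, your stated worry about ``a large $\Delta$-collapse forced by the opponent'' cannot occur: by Lemma~\ref{lem:oneunit} (applied to $-s$) every Right move lowers $\Delta$ by at most one. The genuine issue in your strategy is rather that $s$ may evolve into a black-headed larva during play, so that Left's own moves there collapse $\Delta$; this is survivable but cleaner to handle in the paper's simpler game $s+\cgfarstar$. Second, if you do cite Section~\ref{sec:canforms} for the larvae, be careful to use only the forward direction of Theorem~\ref{thm:bipass}; the converse direction there invokes Theorem~\ref{thm:bipAW}, so quoting it wholesale would be circular.
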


From this result we gather for example that the composite position in Figure~\ref{fig:collective} has atomic weight -1, and hence, already by the general atomic weight theory, Right wins playing first. But, in fact, by the restriction to \bip (Proposition~\ref{prop:singlestrip}, iv), we will be able to  conclude that Right wins independently of who starts.

A very basic lemma follows directly by combining these two results. Let us first introduce a main concept of \bipass.

\begin{definition}[Unit-bypass]\label{def:unitbyp}
Let $s$ be a \bip strip with at least two black stones. Then, if the right most black stone is moved to the right end, the move is called a {\em \ubp}. The analogous terminology holds for white stones.  A {\em neighbor-bypass} is a move that swaps a stone with its neighbor.
\end{definition}

Figure~\ref{fig:amorem} depicts a \ubp. The requirement that there are at least two black stones is essential, because the terminology has been introduced for a strategic reason. Namely, the move $\bl\wh\rightarrow 0$ may seem to be of the same form, but is excluded since it does not affect the $\Delta$-excess. Every \ub `improves' the $\Delta$-excess for the current player. (Only rarely a $\Delta$-increase is bad for Left.) The type of move in the definition assures that there remains a non-empty alive strip. 

We saw that a \ub is a correct winning move in many instances. But, a neigbor-bypass can be a winning move, in cases when there is no \ubp. This happens for example in the N-position $\bl\wh\wh +\bl\wh$, where both players, as starting players, can win by making a neigbor-bypass. Coincidently, Right's winning \nb is also a \ubp. See also Example~\ref{ex:nonendbypass}, where a \nb is the unique winning move in spite there existing a \ubp. But Left loses, if she plays the only available \ubp. 

\section{More on the $\Delta$-excess, and a single strip solution.}\label{sec:singlestrip}
Play on a single strip is very simple, and relies only on observing changes in $\Delta$. We get repeated use of the following basic observation, which formalizes our use of a \ubp.

Recall that a strip with only one black piece is a larvae. A black headed larvae of length $k\geq 3$ has atomic weight $k-2$ which is the greatest atomic weight possible for a strip with $k$ pieces. Right's unique move is a \ubp, which decreases the atomic weight by 1, whereas Left can move to a position with atomic weight $j$ with $0\leq j<k$.

\begin{lemma}\label{lem:oneunit}
Consider a non-empty {\sc bipass} strip $s$. If it contains at least 2 black stones, there is a unique Left move that increases $\Delta$. Moreover, this move increases $\Delta$ by precisely one unit. Otherwise, if $s$ contains exactly one black stone and $|s|>2$, then $\Delta$ decreases by moving. \end{lemma}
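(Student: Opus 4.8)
The plan is to analyze directly what a Left move does to the quantities $b(s)$ and $w(s)$, and hence to $\Delta(s) = w(s) - b(s)$. A Left move swaps a black stone with a white stone lying to its right, with no black stone strictly between them; equivalently, Left picks a black stone and jumps it over a block of consecutive white stones immediately to its right. Crucially, neither $b(s)$ nor $w(s)$ changes under such a swap \emph{unless} a stone dies, and a stone dies only when it reaches the end of the strip with no opponent beyond it. A white stone that Left jumps gets shifted one step left, so it never reaches the \emph{right} end; and since Left's move places her black stone strictly to the right of at least one white stone (the last one jumped), that black stone is not at the right end either. Thus the only way a Left move changes $\Delta$ is if the moved black stone was the rightmost black stone and, after landing, there is no white stone to its right — i.e. it jumped \emph{all} remaining white stones to its right; in that case exactly one black stone dies (the moved one, which achieves eternal life / is removed), so $b(s)$ drops by one and $\Delta$ increases by exactly one. (One must also check the jumped white stones do not themselves die by being pushed onto a black stone that then becomes isolated — but a jumped white stone lands between black stones or to the left of the mover's original position, so it still faces the mover's black stone or another; no further deaths occur. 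I should state this carefully.)

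First I would make precise the claim that a Left move changes $\Delta$ by at most one, and only by dropping a black stone when that black stone bypasses every white stone to its right. Second I would show that if $s$ has at least two black stones, such a move \emph{exists} and is \emph{unique}: uniqueness because the only candidate is the rightmost black stone jumping the entire (necessarily nonempty, since all stones are alive so $w(s)>0$) block of white stones to its right that is not interrupted by a black stone — but that block being ``all white stones to the right'' forces the rightmost black stone to be the relevant one, and the move is determined. Existence: with $b(s)\ge 2$, the rightmost black stone $B$ has only white stones to its right (being rightmost), and there is at least one such white stone; Left may jump $B$ over all of them, which is a legal move (no black stone lies between $B$ and those white stones) and by the first part increases $\Delta$ by one. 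This is the \ubp\ of Definition~\ref{def:unitbyp}, and it is the unique $\Delta$-increasing Left move.

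Third, for the ``otherwise'' case: if $s$ has exactly one black stone and $|s| > 2$, then $s$ is a black headed larva $\bl\wh\cdots\wh$ with $w(s) = |s| - 1 \ge 2$ white stones. Any Left move jumps the single black stone over some positive number of the white stones to its right; since it is the only black stone, after the jump there are still white stones to its right (because it jumps a proper sub-block only if... — here I must note that Left \emph{could} jump all white stones, sending the black stone to the right end; but then the black stone bypassed all remaining amoebae and gains eternal life, i.e. is removed, which is the move $\bl\wh\cdots\wh \to 0$; since $|s|>2$ this still changes $\Delta$). Hmm — so I need to reconcile with the lemma's assertion that $\Delta$ \emph{decreases}. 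The resolution: when the lone black stone reaches the right end it is removed for gaining eternal life, and simultaneously \emph{all} white stones it bypassed now face no black stone, so they are all removed too; the resulting strip is empty, $\Delta(0) = 0 < w(s) - 1 = \Delta(s)$. And if Left jumps only some of the white stones, no stone dies, so $\Delta$ is unchanged — wait, the lemma says ``$\Delta$ decreases by moving'', asserting it for \emph{every} move. Let me re-read: with one black stone, $b(s)=1$, so a non-death move keeps $b=1, w=w(s)$, $\Delta$ unchanged, contradicting ``decreases''. I think the intended reading is that $\Delta$ \emph{does not increase}, and in fact for the larva all moves either fix $\Delta$ or (the all-jump move) collapse to $0$; I would restate the ``otherwise'' conclusion as: no Left move increases $\Delta$, and every Left move weakly decreases it.

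The main obstacle I anticipate is precisely this bookkeeping of \emph{which stones die} on a move — the eternal-life/removal rule interacts subtly with the larva case, and getting the exact statement of the ``otherwise'' clause to match the formal death rules (rather than the informal ``decreases'') is the delicate point. Once the death analysis is pinned down, the existence/uniqueness of the $\Delta$-increasing move for $b(s)\ge 2$ is immediate, and symmetry (Theorem~\ref{thm:AWprops}(ii)-style swapping of colors, or just the mirror argument) gives the analogous statements for Right that are used elsewhere.
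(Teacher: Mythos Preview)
Your approach for the case $b(s)\ge 2$ is essentially the paper's: a Left move can kill at most one black stone, and that happens precisely when the rightmost black stone is sent to the right end. That part is fine.

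The gap is in your treatment of the larva case $b(s)=1$, $|s|>2$. You write that if Left jumps the lone black stone over only some of the white stones, ``no stone dies, so $\Delta$ is unchanged'', and you then start to doubt the lemma's wording. But this is where your death-bookkeeping goes wrong. A white stone is alive only if it has a black stone somewhere to its \emph{left}. In $\bl\wh^{\,n}$, when Left jumps the single black stone over the first $k$ white stones, those $k$ whites are pushed to the left of the \emph{only} black stone in the strip; there is nothing to their left, so all $k$ of them die. The resulting strip is $\bl\wh^{\,n-k}$ (or $0$ if $k=n$), and $\Delta$ drops from $n-1$ to $n-k-1$ (or to $0$). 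Since $|s|>2$ forces $n\ge 2$, every Left move (and the unique Right move, which yields $\bl\wh^{\,n-1}$) strictly decreases $\Delta$. The lemma is correct as stated; your proposed weakening to ``does not increase'' is unnecessary.

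The same oversight appears earlier in your general analysis: your sentence ``the only way a Left move changes $\Delta$ is if the moved black stone was the rightmost \ldots'' is false, because moving the \emph{leftmost} black stone can kill the jumped white stones and thus \emph{decrease} $\Delta$. This does not damage the uniqueness claim (such moves only lower $\Delta$, never raise it), but you should not assert it.
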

\begin{proof}
An increase of $\Delta$ means that some black stones have been eliminated. Since by moving, Left moves exactly one black stone, then $\Delta$ can increase by at most one unit. Since $s$ is non-empty, then there is at least one black stone to the left of a white stone. Consider the rightmost such stone. It has at least one white stone to the right, since otherwise it would have been  eliminated. Hence Left can remove this black stone and shift the rightmost white stones one step to the left.  This type of moves increases $\Delta$ by one unit, unless there is exactly one black stone in $s$, in which case $\Delta$ weakly decreases to 0 (since in this case, $\Delta(s)\ge 0$).

Next, assume that Left moves any other black stone. Then, this black stone cannot be moved to the rightmost slot, since that means it would have jumped the rightmost black stone. Hence it cannot decrease $\Delta$. 

The last statement is obvious, since the strip must be of the form $\bl\wh\cdots\! \wh$, with at least 2 white stones, and any move by either player will decrease $\Delta$ by at least one unit. 
\end{proof}

The type of move indicated by the proof of Lemma~\ref{lem:oneunit}, with an increase of $\Delta$, is of course a \ubp: the strip has at least two black stones, and Left moves the right-most black stone to the end of the strip. This type of move is central to the analysis of \bipass. 

The solution of \bip on a single strip is very simple.

\begin{proposition}\label{prop:singlestrip}
Let $s$ denote a single \bip strip. 
\begin{enumerate}[(i)]
\item The first player loses if $|s|=0$ (P-position); 
\item The first player wins if $\Delta(s)=0$ and $|s|>0$ (N-position).
\item Left wins if $\Delta(s)>0$ (L-position); 
\item Right wins if $\Delta(s)<0$ (R-position); 
\end{enumerate}
\end{proposition}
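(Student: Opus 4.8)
The plan is to prove items (i)--(iv) simultaneously, by induction on the rank of $s$; one cannot simply induct on $|s|$, since a neighbour-bypass may leave the number of stones unchanged (for instance $\bl\bl\wh\wh\wh\to\bl\wh\bl\wh\wh$), but every \bip option does have strictly smaller rank because \bip is a finite game. Item (i) is the base case: the empty strip is $0$ and the player to move has no move. For the inductive step, suppose $|s|>0$; then, all stones being alive, $b(s)\ge 1$ and $w(s)\ge 1$, so both players can move, and moreover the leftmost stone of $s$ is $\bl$ and the rightmost is $\wh$. I would split according to the sign of $\Delta(s)$. The case $\Delta(s)<0$ is the mirror image of $\Delta(s)>0$ under the symmetry of \bip that reverses the strip and interchanges the two colours: this symmetry carries $s$ to $-s$, negates $\Delta$, and interchanges the outcomes $\L$ and $\R$, so it suffices to treat $\Delta(s)=0$ and $\Delta(s)>0$.

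For $\Delta(s)=0$ (item (ii)): if $b(s)=w(s)=1$ then $s=\bl\wh$ and the player to move wins by moving to $0$; if $b(s)=w(s)\ge 2$, then by Lemma~\ref{lem:oneunit} the player to move, say Left, has the \ubp\ to a strip $s'$ with $\Delta(s')=1$, which by the induction hypothesis (item (iii)) is an $\L$-position, so Left wins, and symmetrically Right wins when Right starts; hence $s$ is an $\N$-position. For $\Delta(s)>0$ (item (iii)) one has $w(s)=b(s)+\Delta(s)\ge 2$, and the goal is that Left wins no matter who starts. If Left starts: when $b(s)=1$ we have $s=\bl\wh\cdots\!\wh$, and Left moves her black stone to the far right, reaching $0$ and winning; when $b(s)\ge 2$, Left plays the \ubp\ to $s'$ with $\Delta(s')=\Delta(s)+1>0$, an $\L$-position by induction.

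The step I expect to be the main obstacle is showing that $s$ is an $\L$-position when Right moves first from a strip with $\Delta(s)>0$, since then every Right option $t$ must be accounted for. By Lemma~\ref{lem:oneunit} applied with the colours interchanged (applicable because $w(s)\ge 2$), exactly one Right move decreases $\Delta$, and it does so by exactly one unit, while every other Right move leaves $\Delta$ unchanged or increases it. In the latter case $\Delta(t)\ge\Delta(s)>0$, so $t\neq 0$ and $t$ is an $\L$-position by induction, and Left wins. For the unique $\Delta$-decreasing move, $\Delta(t)=\Delta(s)-1\ge 0$: if this is positive, $t$ is again an $\L$-position by induction; if it equals $0$, the key observation is that a Right move relocates a single white stone and can delete that stone only among the white stones (every other white stone keeps all the black stones that lay to its left), so, $\Delta$ having dropped by exactly one, this move deletes one white stone and no black stone, whence $b(t)=b(s)\ge 1$, $t\neq 0$, and by the induction hypothesis (item (ii)) $t$ is an $\N$-position with Left to move; hence Left wins. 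Thus $s$ is an $\L$-position, and item (iv) now follows from (iii) by the colour-and-direction symmetry.
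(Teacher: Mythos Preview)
Your proof is correct and follows essentially the same route as the paper's: reduce to $\Delta(s)\ge 0$ by symmetry, have the mover play a \ub\ (or eliminate the strip when $b(s)=1$), and for the case where the opponent moves first use Lemma~\ref{lem:oneunit} to bound the change in $\Delta$ by one unit, then invoke the induction hypothesis. Your version is in fact a bit more careful than the paper's on two points: you make explicit that the induction is on rank rather than $|s|$ (necessary, since a neighbour-bypass need not shrink the strip), and you verify that after Right's $\Delta$-decreasing move the resulting strip $t$ is non-empty when $\Delta(t)=0$, so that item~(ii) actually applies.
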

\begin{proof}
By symmetry, it suffices to prove (i) - (iii). The first item is obvious, so consider (ii) and (iii).\\

\noindent {\em Case 1, Left does not have a unit-bypass as an option:} In this case the strip is a black headed larvae, i.e. of the form $\bl\wh\cdots \wh$. Therefore, if $\Delta(s) = 0$, $s=\bl\wh$, an N-position. Otherwise, Left wins playing first or second, by eliminating the strip.\\

\noindent {\em Case 2, Left has a unit-bypass as an option:} 
We claim that, if $\Delta(s)\geq 0$ then Left wins, going first, by playing the \ubp. Namely, this increases 
$\Delta(s)$ by one unit, and then use (iii), by induction. If $\Delta(s) > 0$, then, by (ii) and induction, Left wins going second. Namely, by Lemma~\ref{lem:oneunit}, Right can, in his first move, at most decrease $\Delta$ by one unit. 
\end{proof}

Hence, we get the following result.
\begin{corollary}\label{cor:P0}
The only P-position of a single \bip strip occurs for the empty game. 
\end{corollary}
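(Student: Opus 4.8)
The plan is to derive this immediately from Proposition~\ref{prop:singlestrip}, which already classifies the outcome of every single \bip strip. First I would observe that the four hypotheses listed there are mutually exclusive and jointly exhaustive over all single strips $s$: either $|s| = 0$, or else $|s| > 0$, and in the latter case --- because every stone on $s$ is assumed alive, so that $b(s) > 0$ if and only if $w(s) > 0$ --- the integer $\Delta(s) = w(s) - b(s)$ is well defined and satisfies exactly one of $\Delta(s) = 0$, $\Delta(s) > 0$, $\Delta(s) < 0$. Thus every single strip falls under precisely one of the items (i)--(iv).

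Next I would simply read off the outcomes: item (i) gives $o(s) = \P$ (the empty strip), item (ii) gives $o(s) = \N$, item (iii) gives $o(s) = \L$, and item (iv) gives $o(s) = \R$. Since $\P$ differs from each of $\N$, $\L$, $\R$, the empty strip is the unique single \bip strip whose outcome is $\P$, which is exactly the assertion of the corollary.

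The only point needing a moment's care --- and it is the lone ``obstacle,'' such as it is --- is confirming the case split is genuinely complete: one must invoke the aliveness convention to rule out a nonempty strip carrying black stones but no white stones (or the reverse), so that $\Delta(s) = 0$ together with $|s|>0$ really does put $s$ under item (ii) rather than leaving it unclassified. Granting that convention, no induction or game-value computation beyond Proposition~\ref{prop:singlestrip} is required, and the corollary follows in one line.
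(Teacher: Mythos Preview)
Your proposal is correct and takes essentially the same approach as the paper, which simply states that the corollary is immediate from Proposition~\ref{prop:singlestrip}. Your added remark about the aliveness convention making the case split exhaustive is a reasonable clarification, but no more is needed.
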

\begin{proof}
This is immediate by Proposition~\ref{prop:singlestrip}.
\end{proof}

This behavior is a bit unusual but does occur in other games, `single stalks' of  \textsc{hackenbush}, `one line' of \textsc{toppling dominoes}, `one heap' of \textsc{wythoff partizan subtraction} \cite{La} and for the case of `one star' in \textsc{cutthroat stars}. 

As a guideline for many situations, a \ub is a good move. But there are severe exceptions to this na\"ive intuition.
\begin{example}\label{ex:nonendbypass}
Figure~\ref{fig:collective2} shows a composite position, where it is non-optimal to play a \ubp. We have that $\Delta = 0$, and although a \ubp, by say player Left, on the top strip gives $\aw= 1$ (the value becomes $\up\star\cggfuz 0$) Right can counter by playing to $\bl\wh+\bl\wh = \star+\star$. Left's unique winning move from $\bl\bl\wh\wh+\bl\wh$ is to $\bl\wh\bl\wh+\bl\wh\equiv \star+\star\equiv 0$. (See Table~\ref{tab:values}.)
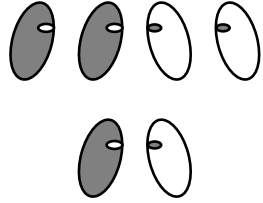
\begin{figure}[ht]
\begin{center}

\begin{tikzpicture}[scale=0.7, every node/.style={transform shape}, line width=1pt]
    \path (-1.3,0) pic {amoebaL};
    \path (0,0) pic {amoebaL};
    \path (1.3,0) pic {amoebaR};
    \path (2.6,0) pic {amoebaR};
\end{tikzpicture}\vspace{4 mm}

\begin{tikzpicture}[scale=0.7, every node/.style={transform shape}, line width=1pt]
    \path (2.6,0) pic {amoebaL};
    \path (3.9,0) pic {amoebaR};
\end{tikzpicture}\caption{A collective for which the unique winning move is a neighbor-bypass.}\label{fig:collective2}
\end{center}
\end{figure}
Of course, if the upper strip in Figure~\ref{fig:collective2} is played alone, then a \ub is the unique winning move if Left starts.
\end{example}
Hence, a na\"ive strategy that always plays a \ub can fail. And any `na\"ive strategy' is bound to fail in general.
\begin{example}\label{ex:compl}
This example points towards arbitrarily complex strategies, and we have used CGSuit \cite{CGSuit}. Consider the game  $\bl\bl\bl\wh\wh\wh$.
This (symmetric) game is the unique game with exactly three options for each player and where, in fact, all options survive in canonical form.  For readability, let us call the game $g=\pm(*,\up) = \cg{*,\up}{*,\cgdown}$ (a game that occurs frequently in {\sc bipass}). Here are the Left options:
\begin{itemize}
\item $g$, 
\item $\{0 | g,\{\cgdoubleup *|\up, g\}\},$ 
\item $\pm(g,\{0|g,\{\cgdoubleup*|\up,g\}\}).$
\end{itemize}
It is easy to justify why all options survive in canonical form, namely one can see that both $\bl\bl\wh\wh\wh+\bl\bl\wh\bl\wh\wh$ and  $\bl\bl\wh\wh\wh+\bl\wh\bl\bl\wh\wh$ are N-positions (find mirroring moves!). Hence, the \ub is a losing Right move from the position $\bl\bl\bl\wh\wh\wh+\bl\bl\wh\bl\wh\wh$. Instead he should play to the mirror game. 

Here is a game with exactly 4 options for each player, where all Left options survive in the canonical form representation: $\bl\bl\bl\wh\wh\wh\bl\wh$ (but not all Right options). Can one find \bip strips that justify the survival of the canonical form options, or do we have to look further in the general class of all-small games? However, in the game $\bl\bl\bl\bl\wh\wh\wh\wh$, only two options survive, for each player, in the canonical form representation. That is, in any  play situation (all-small games) the other two options can be ignored.  Are there arbitrarily large \bipass\ games such that all (Left) options survive in the canonical form?
\end{example}

\section{Canonical forms of some \bip strips}\label{sec:canforms}
In this section, we characterize an infinite class of \bip game values (canonical forms). 
Standard CGT-notation is `up' $\cgup=\cg{0}{\star}$ (and `down' $\cgdown=\cg{\star}{0}$), so that $\cgup\star=\{\star,0\mid 0\}$ (and $\cgdown\star=\{0\mid \star,0\}$), either being a standard representations of `a unit' in atomic weight theory (Section~\ref{sec:AW}). The latter will often be the convenient representation in the proofs to come. Therefore, to simplify notation, we write $\upstar=\cgup\star$, and $k$ such games in a disjunctive sum is conveniently $k\upstar$, and moreover, we will abbreviate $k\upstar+\star=k\upstar\star$. (With this notation, $\upstar\star=\cgup$, $\cgup\star=\upstar$, $\downstar\star=\cgdown$ and $\cgdown\star=\downstar$.)

The next result is not new but it is worth re-iterating it with the new notation.
\begin{lemma}[\cite{LiP}]\label{lem:LiP}
For any integer $k>0$, $k\upstar\star\equiv \{0\mid(k-1)\upstar\star\}$.
\end{lemma}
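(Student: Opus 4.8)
The plan is to recognize the statement as a restatement, in the $\upstar$-bookkeeping of this section, of the two classical simplification identities for stacks of $\cgup$'s (see \cite{WW} and \cite{LiP}), and then to translate between those two notations while tracking the parity of $k$. As values one has $k\upstar\star = k(\cgup+\star)+\star = k\cdot\cgup + (k+1)\star$, where the $\star$-summands cancel in pairs; hence $k\upstar\star \equiv k\cdot\cgup$ when $k$ is odd, $k\upstar\star \equiv k\cdot\cgup+\star$ when $k$ is even, and, correspondingly, $(k-1)\upstar\star \equiv (k-1)\cdot\cgup+\star$ when $k$ is odd while $(k-1)\upstar\star \equiv (k-1)\cdot\cgup$ when $k$ is even.

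The two facts I would invoke are \textbf{(a)} $n\cdot\cgup \equiv \{0 \mid (n-1)\cdot\cgup+\star\}$ for every $n\ge 1$ (for $n=1$ this is just $\cgup=\{0\mid\star\}$), and \textbf{(b)} $n\cdot\cgup+\star \equiv \{0\mid (n-1)\cdot\cgup\}$ for every $n\ge 2$ (this one genuinely needs $n\ge 2$, since $\cgup\star=\{0,\star\mid0\}$ is not $\star=\{0\mid 0\}$). These are standard. If one prefers a self-contained derivation to a citation, (a) and (b) are proved simultaneously by one induction on $n$, each step amounting to checking that the difference of the two sides is a $\P$-position, which reduces to outcome computations for a handful of residual sums; most of these are dispatched at once by the two-ahead rule, Theorem~\ref{thm:AWprops}(iii)--(iv), since they have atomic weight $\ge 1$ or $\le -2$, and the rest follow from the joint inductive hypothesis.

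With (a) and (b) in hand the lemma is immediate. If $k$ is odd, $k\upstar\star \equiv k\cdot\cgup \equiv \{0\mid(k-1)\cdot\cgup+\star\} \equiv \{0\mid(k-1)\upstar\star\}$ by (a), which also covers $k=1$; if $k$ is even, then $k\ge 2$ because $k>0$, so $k\upstar\star \equiv k\cdot\cgup+\star \equiv \{0\mid(k-1)\cdot\cgup\} \equiv \{0\mid(k-1)\upstar\star\}$ by (b). This exhausts all $k>0$.

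I do not anticipate a real obstacle, since the mathematical content is classical and only the notation is new; the one thing to be careful about is the parity bookkeeping of the $\star$'s together with the $n\ge 2$ restriction in (b). The only place where nontrivial effort could arise is if one declines to cite (a) and (b) and instead re-proves them here: the joint induction, while routine, must be written out, and its mild subtlety is confirming that after a player's ``wasteful'' move inside a stack of $\cgup$'s the opponent still has a winning reply, which is precisely where the two-ahead rule and the inductive hypothesis are used.
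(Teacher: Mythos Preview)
Your proposal is correct but takes a genuinely different route from the paper. The paper proves the lemma by a direct induction entirely in the $\upstar$-notation: assuming $k\upstar\star\equiv\{0\mid(k-1)\upstar\star\}$, it shows that $\{0\mid k\upstar\star\}+\{(k-1)\downstar\star\mid 0\}+\downstar$ is a $\P$-position by a short case analysis on who moves first and where. You instead unwind the notation to $k\cdot\cgup$ or $k\cdot\cgup+\star$ according to parity, and then invoke the two classical canonical-form identities $n\cdot\cgup\equiv\{0\mid(n-1)\cdot\cgup+\star\}$ and (for $n\ge 2$) $n\cdot\cgup+\star\equiv\{0\mid(n-1)\cdot\cgup\}$. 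Your approach is shorter and makes the connection to the standard literature transparent, at the cost of the parity bookkeeping and the need to track the $n\ge 2$ restriction in (b); the paper's approach is self-contained in its own notation and sidesteps the parity split, at the cost of a small explicit $\P$-position verification. Both are perfectly valid, and your care with the $k=1$ and $n\ge 2$ edge cases is exactly right.
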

\begin{proof}
Induction: $k=1$ gives $\up=\{0\mid\star\}$, which is the definition of $\up$. Suppose the statement holds for $k\ge 1$, and we must prove that $(k+1)\upstar\star=\{0\mid k\upstar\star\}$.

By induction, we have that $k\upstar\star +\upstar=\{0\mid(k-1)\upstar\star\}+\upstar$, and so, it suffices to prove that the first player loses $ \{0\mid k \upstar\star\} + \{(k-1)\downstar\star\mid 0\}+\downstar$.  

Suppose Left starts, and plays to $\{(k-1)\downstar\star\mid 0\}+\downstar$. Then Right can respond to $\{(k-1)\downstar\star\mid 0\}$, which is bad for Left. 
If Left plays to $ \{0\mid k\upstar\star\} + \{(k-1)\downstar\star\mid 0\}$, then Right responds to $ k\upstar\star + \{(k-1)\downstar\star\mid 0\}$ and whatever Left does, Right can play to $((k-1)\upstar\star) + (k-1)\downstar\star =0$, which is losing for the first player, which is Left. 
The arguments for Right starting lead to similar situations.
\end{proof}

Let $\bl^n$ and $\wh^n$ denote $n$ consecutive black and white stones in a \bip strip, respectively.

\begin{theorem}\label{thm:bipass}
Consider a single {\sc bipass} strip $s$. 
Then, for any nonnegative integers $n$ and $k$, 
\[s=\bl\wh^{n+k}\bl^n\wh \mbox{ if and only if }s\equiv k\upstar \star\]
 and similarly
\[s=\bl\wh^{n}\bl^{n+k}\wh \mbox{ if and only if } s\equiv k\downstar \star.\]
\end{theorem}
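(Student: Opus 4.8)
The plan is to prove the first equivalence $s = \bl\wh^{n+k}\bl^n\wh \equiv k\upstar\star$ by induction on $n$ (with $k$ free), and then obtain the second statement for free by the symmetry $-s$ corresponds to reversing and swapping colors, using Theorem~\ref{thm:AWprops}(ii)-style reasoning (more precisely, the reflection map on strips is a game-negation, so $\bl\wh^{n}\bl^{n+k}\wh = -(\bl\wh^{n+k}\bl^n\wh)$, hence $\equiv -k\upstar\star = k\downstar\star$). So the real work is the single chain of equivalences for the black-headed-tail family.

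For the base case $n=0$: the strip is $\bl\wh^{k}\wh = \bl\wh^{k+1}$, a black headed larvae with $k+1$ white stones. One must check $\bl\wh^{k+1}\equiv k\upstar\star$. I would do this by an inner induction on $k$: for $k=0$ we get $\bl\wh = \star$, which matches. For the step, I would compute the options of $\bl\wh^{k+1}$ directly from the rules — Left has options to $0$ and to $\bl\wh^{j}$ for $1\le j\le k$ (removing varying numbers of whites via the neighbor-bypass and longer bypasses, but with only one black stone every Left move just slides that black stone rightward past some whites, deleting the whites it passed... wait, need care: with one black stone and $k+1$ whites, Left moving the black stone rightward past $j$ whites lands it in the $j$-th white's slot, shifting those $j$ whites left, so the result is $\wh^{j}\bl\wh^{k+1-j}$; after the dead whites $\wh^j$ are removed this is $\bl\wh^{k+1-j}$, and for $j=k+1$ we get $0$). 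Right's only option is the single \ubp-like move $\bl\wh^{k+1}\to\wh\bl\wh^{k}\to\bl\wh^{k}$... no wait, Right moves a white stone leftward past black stones, and there's only one black stone, so Right's unique move slides one white past the black stone, giving $\bl\wh^{k}$ after removing the now-isolated white. So $\bl\wh^{k+1} = \{0,\bl\wh^{k},\dots,\bl\wh\mid \bl\wh^{k}\}$, and using the induction hypothesis $\bl\wh^{j}\equiv (j-1)\upstar\star$ together with Lemma~\ref{lem:LiP} ($k\upstar\star = \{0\mid (k-1)\upstar\star\}$) plus the fact that the extra dominated Left options $(j-1)\upstar\star$ for $j<k+1$ can be removed (they are dominated since $\upstar\star$ quantities are totally ordered up to $\star$, or rather one shows $(j-1)\upstar\star \le k\upstar\star$-ish — here I need to be a little careful about $\star$ differences, but domination of $\{0\mid(k-1)\upstar\star\}$-style options should go through), we get $\bl\wh^{k+1}\equiv k\upstar\star$.

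For the inductive step on $n$: assume $\bl\wh^{n-1+k}\bl^{n-1}\wh \equiv k'\upstar\star$ for all $k'\ge 0$. Consider $s = \bl\wh^{n+k}\bl^n\wh$. I would enumerate Left's and Right's options. Left's \ubp moves the rightmost of the $n$ black stones ($n\ge 1$) to the far right end, past the single trailing white, giving a strip of the form $\bl\wh^{n+k}\bl^{n-1}\wh$ (the lone trailing white survives because the moved black stone sits at the extreme right with nothing to its right), i.e.\ $\bl\wh^{(n-1)+(k+1)}\bl^{n-1}\wh \equiv (k+1)\upstar\star$ by the induction hypothesis; this should be Left's unique $\Delta$-increasing move by Lemma~\ref{lem:oneunit}. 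Right's options: Right moves one of the trailing configuration's whites leftward — but the only white that can move past black stones is one adjacent-on-the-right to a black run; the structure $\wh^{n+k}\bl^n\wh$ has exactly one white to the right of the $\bl^n$ block, and Right slides it past all $n$ blacks (it may not stop in between since it cannot bypass... actually Right *can* stop after passing some blacks as long as no white lies between — and there is no white among the $\bl^n$, so Right can move that white left past any $1\le i\le n$ of the blacks), producing $\bl\wh^{n+k}\wh^{i}\bl^{n-i}$ after... hmm, this needs the careful slot bookkeeping, giving strips $\bl\wh^{n+k+?}\bl^{n-i}$ which are black headed larvae or shorter; plus Right may also move whites from the $\wh^{n+k}$ block? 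No — those whites have the single black head to their left and the $\bl^n$ block to their right, so a white in $\wh^{n+k}$ moving left past the head $\bl$ is possible, giving $\wh\bl\wh^{n+k-1}\bl^n\wh\to\bl\wh^{n+k-1}\bl^n\wh$. The main obstacle is precisely this careful enumeration of all options with correct slot-shifting, and then showing via Lemma~\ref{lem:LiP}, the induction hypotheses, and domination/reversibility arguments that the resulting game form reduces to $\{0\mid k\upstar\star\} = k\upstar\star$ — in particular checking that all of Right's options other than the one landing near $(k-1)\upstar\star$-type values are reversible through $0$ or dominated, and that all of Left's options other than the \ubp are dominated by it. I expect the bookkeeping of which stones die and the reversibility/domination verification to be the delicate part; the skeleton (induct on $n$, peel off a \ubp for Left, use Lemma~\ref{lem:LiP} to recognize the canonical form) is the clean idea.

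Finally, for the "only if" direction: since the map $\bl\wh^{n+k}\bl^n\wh \mapsto k$ together with its white-headed analogue, as $n,k$ range over nonnegative integers, produces pairwise inequivalent games (the values $k\upstar\star$ are pairwise distinct, and distinct from all $k\downstar\star$ except at $k=0$ where both are $\star$ and indeed both strips $\bl\wh$ coincide), the "if" direction already forces the "only if": a strip of the stated shape has the stated value and no strip of that shape has a different value, so conversely a strip equal to $k\upstar\star$ that happens to be of one of these two parametrized shapes must be the specific one. I would state this observation explicitly rather than prove a separate converse.
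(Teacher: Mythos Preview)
Your reading of the ``only if'' direction is a genuine error. The theorem is about an \emph{arbitrary} single \bipass\ strip $s$: it asserts that if $s\equiv k\upstar\star$ for some $k\ge 0$, then $s$ is literally of the form $\bl\wh^{n+k}\bl^n\wh$ for some $n$. Your proposed converse (``a strip of the stated shape that has value $k\upstar\star$ must be the specific one'') is a triviality and is not what is claimed. The paper's converse is substantive: it first pins down $\Delta(s)=k$ by playing $s+k\downstar\star$ and showing that any discrepancy $\Delta(s)\ne k$ lets the appropriate player win going first; then, knowing $\Delta(s)=k$, it argues (using the two-ahead rule together with Theorem~\ref{thm:bipAW}) that in the game $s+k\downstar\star$ a Left \ub\ forces a Right \ubp, and that this pair of moves cannot land in the parametrized family unless $s$ was already in it, whence induction finishes. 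None of this is present in your outline.

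For the ``if'' direction your route is different from the paper's and, as sketched, incomplete. The paper does not enumerate options of $s$ and argue domination/reversibility against the canonical form $\{0\mid (k-1)\upstar\star\}$; instead it proves directly that the first player loses $(n,k)+k\downstar\star$ by a short case analysis (six cases total), invoking only Lemma~\ref{lem:LiP}, Proposition~\ref{prop:singlestrip}, and induction on $2n+k$. Your approach could in principle be pushed through, but you would need to handle \emph{all} Left options (you omitted the moves of the head $\bl$ past $j\le n+k$ whites, yielding $\bl\wh^{n+k-j}\bl^n\wh$ and, for $j=n+k$, the white-headed larva $\bl^{n+1}\wh$) and \emph{all} Right options (including the move of a white from the $\wh^{n+k}$ block past the head, and the moves of the trailing white past $1\le i\le n$ blacks, several of which produce strips not in the inductive family), and then verify the required dominations and reversals. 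That is considerably more bookkeeping than the paper's difference-game argument, and your sketch does not carry it out.
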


\begin{proof}
The second statement is obtained by taking the negative of the first game, and so it suffices to prove the first. In this result, a position is succinctly described by an ordered pair of nonnegative integers. Whenever convenient, we abbreviate $(n,k)=\bl\wh^{n+k}\bl^{n}\wh$. In particular, it can be easier to see the explicit induction this way. 

First we show that if  $s=(n,k)$ then $s\equiv k\upstar\star$.
We induct on $2n+k=|s|-2$. 

When $2n+k = 0$, then $\bl\wh=\star$. Assume that the claim is true for all strips $s=\bl\wh^{n+k}\bl^{n}\wh$, with $2n+k < m$, for some $m > 0$, and set $0<2n+k=m$, i.e. $n>0$ or $k>0$.
 By Lemma~\ref{lem:LiP}, we have that $k\downstar\star\equiv \cg{(k-1)\downstar \star}{0}$. It suffices to prove that $(n,k)+k\downstar\star \equiv 0$, i.e. that the first player loses $(n,k)+\cg{(k-1)\downstar \star}{0}$.\\

\noindent Case 1: Left starts.

\begin{enumerate}
\item Left plays to $(n,k) +\ (k-1)\downstar\star$. Right responds with a \ub to 
 $(n,k-1)+(k-1)\downstar\star\equiv 0$, by induction.
\item Left plays an \ub to $(n,k-1)+k\downstar\star$. Right responds with a \ubp, resulting in $(n-1,k)+k\downstar\star\equiv 0$, by induction.
\item Left plays a non-\ub to either $H=(n,\ell)+k\downstar\star$,  $\ell<k$, or 
 $H'=(p,p+q)+k\downstar\star$, $p<n$, $p+q=n$. In the first case, by induction, $H\equiv\ell\upstar\star +k\downstar\star\equiv(k-\ell)\downstar<\cgfuzzy0$. 
In the second, by induction, $H'\equiv(q+k)\downstar<\cgfuzzy0$, since $q+k>0$. In, either case, since Right starts, Right wins.
\end{enumerate}

\noindent Case 2: Right starts.
\begin{enumerate}
\item If Right plays to $(n,k)+0$, then Left wins by Proposition~\ref{prop:singlestrip}, since $n+k>0$.
\item Suppose Right plays the \ub to $(n,k-1)+k\downstar\star$. By induction 
this is equivalent to $(k-1)\downstar\star+k\downstar\star=\downstar$, and Left wins going next. 
\item Suppose Right plays to $\bl\wh^{n+k}\bl^{n-j}\wh+k\downstar\star$, $j>0$. By induction,
this is equivalent to $(k+j)\upstar\star + k\downstar\star =j\upstar\star$
which Left wins going first.
\end{enumerate}

Now suppose that a {\sc bipass} strip $s\equiv k\upstar\star$, and we must show that $s=\bl\wh^{n+k}\bl^n\wh$.\footnote{This is the literal form of $s$ of course.} We first show that $\Delta(s)=k$.

We demonstrate that, if $\Delta(s)=j >k$, then $s+k\downstar\star\not\equiv 0$. Namely, we show that Left wins by playing first to $s+(k-1)\downstar\star$. By Proposition~\ref{prop:singlestrip}, Right loses if he moves to $s+0$ (see Lemma~\ref{lem:LiP}) so he must play an \ub to say $s^R+(k-1)\downstar\star$, and note that $\Delta(s^R) = j-1$. Similarly, if, at each move, Left decreases the number of $\downstar$s, then Right must play an \ub until, after Right's move the position is $s'+0$. Since $\Delta(s')=j-k>0$, by Proposition \ref{prop:singlestrip}, Left wins $s+k\downstar\star$, going first. 

Similarly, if $\Delta(s)=j <k$  then Right can win $s+k\downstar\star$, going first, by making an \ub. 
Regardless whether Left makes an \ub, or eliminates a $\downstar$, Right continues making \ub es until $s$ has been reduced to $0$. After Left's response, the position is $\ell\downstar\star$, 
 for some $\ell\geq k-j$. Since $\ell\downstar\star$ is negative, Right wins going first. 

Therefore we conclude that $\Delta(s) = k$. But apart from that information, $s$ might be arbitrary. 
Again, consider playing $s+k\downstar\star\equiv 0$.

Suppose that Left goes first. We will show that Left can force a win, unless $s$ is of the specified form. 

If $|s|=2$, then  $s=\bl\wh=(0,0)\equiv \star$, the previous player wins and $s$ is of the specified form, with $n=k=0$ (the cases $\bl\bl$ and $\wh\wh$ are not alive). If $|s|=3$, then $s=\bl\wh\wh=(0,1)$. If $|s|=4$, there are 3 cases, $s=\bl\bl\wh\wh$, $s=\bl\wh\bl\wh=(1,0)$ or $s=\bl\wh\wh\wh=(0,2)$. In the first case Left wins $s+*$ as indicated in Example~\ref{ex:nonendbypass}, using the non\ub move, and the other two cases, $(1,0) +\star$ and $(0,2)+2\downstar\star$ are P-positions. 

In the last part of this proof, we will required a classical result of atomic weight theory, namely the 2-ahead rule, together with the main result.

Let us proceed by induction. Suppose that $s$ is not of the specified form. Claim: An \ub by Left, provokes an \ub by Right. This follows, by combining Theorems~\ref{thm:AWprops} and \ref{thm:bipAW}. Namely, because $\Delta=0$ to start off with it would turn $\Delta =2$ if Left plays another \ub after Right neglecting to do so.

If such a pair of moves results in a position not of the specified form, then, we are done by induction. But, we claim that after Left's \ubp, Right's provoked \ub cannot produce a position of the specified form, unless  $s$ is as in Example~\ref{ex:nonendbypass}. By way of contradiction, assume that the position, after Right's response to Left's opening move, is of the form $s=(n,k)$. By assumption both players played a \ubp. That is each player eliminated  one of their own pieces: say $s\rightarrow s'\rightarrow \bl\wh^{n+k}\bl^n\wh$. Hence $s'=\bl\wh^{n+k+1}\bl^n\wh$, and $s=\bl\wh^{n+k+1}\bl^{n+1}\wh$, which contradicts that $s$ be not of the specified form. 
\end{proof}

\section{Atomic weight theory}\label{sec:AW}

The game of \textsc{bipass} has so far been analyzed to the level of playing one strip or the disjunctive sum of  very specific strips.  To play \textsc{bipass} in disjunctive sum with other all-small games it is convenient to consider an approximation to the value. Here, we review well-known theory on atomic weights ($\aw$). 

In cases where it is easy to compute the atomic weights, but the canonical form is unintelligible, this theory guides us in answering the general question of ``which game to prefer'', while just knowing the outcome almost never answers this question.

The value of any strip $s$, as in Theorem \ref{thm:bipass}, is $\Delta(s)\cdot\upstar\star$, and we will see that $\aw(s)=\Delta(s)$, which continues to hold if we replace $s$ with a sum of {\sc bipass} strips. This is the essence of atomic weights in {\sc bipass}, and this will be proved in Section~\ref{sec:main} (Theorem~\ref{thm:bipAW}). 

Let us recall the atomic weight of an all-small combinatorial game.

\begin{definition}[Far Star] 
The {\em far star}, $\cgfarstar$, is an arbitrarily large Nim-heap, i.e. both players can move to any {\sc nim}-heap from $\cgfarstar$, with the additional property $\cgfarstar +\cgfarstar = \cgfarstar$. 
\end{definition}

Equivalence modulo $\cgfarstar$ is obtained as follows. 
\begin{definition}[Equivalence Modulo $\cgfarstar$]
Let $G,H$ be normal play games. Then $G =_\cgfarstar H$, if, for all games $X$, $o(G+X+\cgfarstar)=o(H+X+\cgfarstar)$. 
\end{definition}

\begin{theorem}[Constructive $\cgfarstar$-equivalence]\label{Confs}
Let $G,H$ be normal play games. Then $G =_\cgfarstar H$ if and only if $\downstar <G-H<\upstar$.
\end{theorem}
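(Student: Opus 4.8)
The plan is to collapse the two-game claim to a one-game claim and then prove the two implications separately, everywhere replacing the far star by an ordinary Nim-heap $*n$ with $n$ chosen large enough that the remote-star stabilisation applies to all games in sight. For the reduction, note that outcomes depend only on values and $G=(G-H)+H$, so substituting the test game $X$ by $X+H$ (a bijection of the class of games) shows $G=_\cgfarstar H$ iff $(G-H)=_\cgfarstar 0$; moreover $\downstar<G-H<\upstar$ is invariant under $G-H\mapsto-(G-H)$ because $\downstar=-\upstar$. Hence it suffices to prove, for an arbitrary game $J$, that $J=_\cgfarstar 0$ iff $\downstar<J<\upstar$, and by this negation symmetry I may focus in each direction on the single bound $J<\upstar$.

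For sufficiency, assume $\downstar<J<\upstar$ and fix a test game $X$. Pick $n$ larger than every Nim-value reachable in $J+X$, so that $o(J+X+\cgfarstar)=o(J+X+*n)$ and $o(X+\cgfarstar)=o(X+*n)$, and show $o(J+X+*n)=o(X+*n)$. The engine is strategy-stealing: the current player runs an optimal line in the $X+*n$-part; the only new event is a move by the opponent inside $J$, and since $\upstar-J>0$ and $J-\downstar>0$ — with the corresponding inductive statements for the options $J'$ — there is always a reply, spending at most one Nim-move in the remote heap, that returns the position to one handled by induction on formal birthday. This is exactly the assertion that a summand confined strictly between $\downstar$ and $\upstar$ is invisible to outcomes once a sufficiently remote star is present.

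For necessity I argue by contraposition. Suppose $J\not<\upstar$, i.e.\ $\upstar-J$ is not positive. If $J\geqslant\upstar$, take $X=\downstar$: by monotonicity of outcomes in the order, $o(J+\downstar+\cgfarstar)\geqslant o(\upstar+\downstar+\cgfarstar)=o(\cgfarstar)=\N$, whereas a short direct computation gives $o(0+\downstar+\cgfarstar)=o(\downstar+\cgfarstar)=\R$ (playing against a remote heap, Right moves it to $*$, reaching $\cgdown$, and wins regardless). Thus $o(J+X+\cgfarstar)\neq o(X+\cgfarstar)$ and $J\neq_\cgfarstar 0$. In the remaining case $J\,\|\,\upstar$, first record that $\upstar\neq_\cgfarstar 0$ (again witnessed by $X=\downstar$), then show $J$ and $\upstar$ are indistinguishable modulo $\cgfarstar$ — because $\upstar-J$ is an $\N$-position whose options all reverse back, so a remote star absorbs the difference — and conclude $J\neq_\cgfarstar 0$.

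The delicate step is necessity in the confused case $J\,\|\,\upstar$: there is no monotonicity to exploit, and one must argue directly that a remote star already absorbs $\upstar-J$; this is essentially the sufficiency statement applied to $\upstar-J$, so the two directions are intertwined and it is cleanest to establish sufficiency first and invoke it here. A secondary, purely technical nuisance is keeping the auxiliary Nim-heap ``remote enough'' after each exchange in the sufficiency argument, which is handled by starting with $n$ large and observing that over a finite play each pair of moves lowers the relevant threshold by only a bounded amount.
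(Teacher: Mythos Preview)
The paper does not prove this theorem at all: it is quoted without proof in the background review of atomic-weight theory as a standard result from the cited references. So there is no paper-proof to compare your attempt against.

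Assessing your proposal on its own merits: the reduction to a single game $J=G-H$ and the sufficiency direction via a remote-star strategy-stealing argument are the standard line and are acceptable as a sketch. The necessity direction, however, has a genuine gap in the subcase $J\,\|\,\upstar$. There you assert that $J=_{\cgfarstar}\upstar$, justifying it by ``$\upstar-J$ is an $\N$-position whose options all reverse back, so a remote star absorbs the difference'', and you add that this is ``essentially the sufficiency statement applied to $\upstar-J$''. But sufficiency requires $\downstar<\upstar-J<\upstar$, and $J\,\|\,\upstar$ does not imply that. Concretely, take $J=\cgdoubleup$: then $\upstar-J=\downstar$, which is indeed confused with $0$ but does not satisfy $\downstar<\downstar$. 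In fact $\cgdoubleup-\upstar=\upstar$, and you yourself recorded that $\upstar\neq_{\cgfarstar}0$; hence $\cgdoubleup\neq_{\cgfarstar}\upstar$, so your route to $\cgdoubleup\neq_{\cgfarstar}0$ via $\upstar$ collapses. The desired conclusion $\cgdoubleup\neq_{\cgfarstar}0$ is still true (for instance $o(\cgdoubleup+\cgfarstar)=\L\neq\N=o(\cgfarstar)$), but your argument does not establish it, and a different treatment of the confused case is needed.
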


The atomic weight is well defined for \emph{all-small} games. In mis\`ere play (and other classes of games) all-small is instead called {\em dicot}. The normal-play naming is due to Conway and helps intuition to remind us that each all-small game is an infinitesimal (this does not hold in mis\`ere play). 

Let $X$ be a set. Then $X+y=\{x+y:x\in X\}.$ If $X$ is a set of all small games, let $\aw(X)=\{\aw(x): x\in X\}$.

\begin{example}
Let $n$ be an integer. By Theorem~\ref{Confs}, $\cgstar n$ has atomic weight 0, since $\downstar < \cgstar n-0<\upstar$.
\end{example}

In this example we had to guess an atomic weight and then verify. Next we will show how to recursively compute the atomic weight of a game.

The product of a game $G$ and $\cgup$ is: $0\cgnmultiply \cgup=0$; $n\cgnmultiply \cgup=\cgup + (n-1)\cgnmultiply \cgup$, in case $G=n$ is an integer. Otherwise $G\cgnmultiply \cgup = \{\GL+\cgdoubleup\cgstar \mid \GR+\cgdoubledown\cgstar \}$. 

\begin{lemma}[\cite{S}]
Consider any normal play game $G$.
\begin{itemize}
\item If $G\cgnmultiply \up\ge_\cgfarstar 0$ then $G\ge 0$. 
\item If $g$ is all-small then there is a unique $G$ such that $g=_\cgfarstar G\cgnmultiply \up$.
\end{itemize}
\end{lemma}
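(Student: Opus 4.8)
The statement packages two things: the faithfulness of multiplication by $\cgup$ modulo $\cgfarstar$ (the first bullet), and the existence and uniqueness of the atomic weight (the second), the latter being in effect the \emph{definition} of $\aw$. I would follow Siegel's route and derive the second bullet from the first. Throughout one works with the recursion $G\cgnmultiply\cgup=\{\GL+\cgdoubleup\cgstar\mid\GR+\cgdoubledown\cgstar\}$ for non-integer $G$ (and with $n\cgnmultiply\cgup$ the literal $n$-fold sum of $\cgup$'s, respectively $\cgdown$'s, when $G=n$ is an integer), and one uses Theorem~\ref{Confs} to convert assertions about $=_\cgfarstar$ and its one-sided refinements $\ge_\cgfarstar$, $\le_\cgfarstar$ into ordinary order comparisons of games; a one-line induction records that $G\cgnmultiply\cgup$ is all-small for every $G$, so that $\aw(G\cgnmultiply\cgup)$ will make sense.

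For the first bullet I would prove the contrapositive, $G\not\ge 0\Rightarrow G\cgnmultiply\cgup\not\ge_\cgfarstar 0$, by induction on the birthday of $G$; to make the induction close one actually establishes the equivalence $G\ge 0\iff G\cgnmultiply\cgup\ge_\cgfarstar 0$, together with its mirror $G\le 0\iff G\cgnmultiply\cgup\le_\cgfarstar 0$, so that the inductive hypothesis is available in both directions at every option. The integer base cases are immediate: if $n<0$ then $n\cgnmultiply\cgup$ is a sum of $\cgdown$'s, and the test game $X=(-n)\cgnmultiply\cgup$ — which cancels it, while $X+\cgfarstar$ is a Left win — exhibits $n\cgnmultiply\cgup\not\ge_\cgfarstar 0$. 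For the inductive step, a non-integer $G$ with $G\not\ge 0$ has a Right option $\GR\le 0$ (Right wins $G$ moving first); then $\GR+\cgdoubledown\cgstar\le\cgdoubledown\cgstar<0$, so Right owns a strictly negative option of $G\cgnmultiply\cgup$, and combining the move $G\cgnmultiply\cgup\to\GR+\cgdoubledown\cgstar$ with the inductive control on $\GR$ lets Right manufacture a test game $X$ that he wins while Left wins $X+\cgfarstar$; the ``$\Rightarrow$'' direction is the mirror Left strategy, exploiting the $\cgdoubleup\cgstar$ summands on Left's side. I expect this to be the crux of the whole lemma: the witness $X$ is not a single universal choice but is produced recursively alongside the strategy, and it is exactly here that the fixed $\pm 2$-unit padding $\cgdoubleup\cgstar,\cgdoubledown\cgstar$ is seen to be the right amount — enough that a blurry $\cgfarstar$-comparison of $G\cgnmultiply\cgup$ with $0$ sharpens to an exact comparison of $G$ with $0$, but no more.

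Granted the first bullet, the second is largely bookkeeping. For existence, define $\aw(g)$ recursively by the standard atomic-weight calculus — a simplicity-type choice of integer when the (shifted) atomic weights of the options leave room, and a direct comparison of $g$ against remote stars in the ``eccentric'' case, which may produce a non-integer value — and then verify $g=_\cgfarstar\aw(g)\cgnmultiply\cgup$ by a strategy argument on the $\cgfarstar$-reduced difference game (Theorem~\ref{Confs}), feeding in the inductive hypotheses $\gL=_\cgfarstar\aw(\gL)\cgnmultiply\cgup$ and $\gR=_\cgfarstar\aw(\gR)\cgnmultiply\cgup$ together with the first bullet; the regular case is mechanical and the eccentric case again reduces to remote-star comparisons. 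Uniqueness then drops out of the first bullet: if $G_1\cgnmultiply\cgup=_\cgfarstar g=_\cgfarstar G_2\cgnmultiply\cgup$, then, using that $\cgnmultiply\cgup$ respects negation and is additive (an auxiliary induction), $(G_1-G_2)\cgnmultiply\cgup=_\cgfarstar 0$, hence both $\ge_\cgfarstar 0$ and $\le_\cgfarstar 0$; applying the first bullet to $G_1-G_2$ and to $G_2-G_1$ gives $G_1-G_2\ge 0$ and $G_1-G_2\le 0$, so $G_1=G_2$.

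In summary, everything rests on the first bullet — the one genuinely combinatorial ingredient, and the step I expect to demand real care — while the construction of $\aw(g)$ and its uniqueness in the second bullet are then routine inductions built on top of it and on Theorem~\ref{Confs}.
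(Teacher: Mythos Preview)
The paper does not prove this lemma: it is cited from Siegel~\cite{S}, and the only comment the paper adds is the single sentence ``The first item implies the second, with some work, and we use the uniqueness to define atomic weight.'' Your proposal is exactly an expansion of that sentence along Siegel's lines, so there is no paper proof to compare against; structurally you are doing precisely what the paper gestures at.

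As a standalone sketch your outline is sound in its architecture --- prove faithfulness of $\cgnmultiply\cgup$ modulo $\cgfarstar$ first, then get uniqueness from additivity and negation of the Norton product, and get existence from the recursive atomic-weight calculus --- and the uniqueness paragraph is clean. Two places would need tightening before this counts as a proof. First, in the inductive step for the first bullet you write that a negative Right option $\GR+\cgdoubledown\cgstar$ ``lets Right manufacture a test game $X$''; this is the real content and you have not said what $X$ is or why Right's strategy succeeds against $\cgfarstar$ --- in Siegel's treatment this step is done carefully and is not just the existence of a negative option. Second, be aware that the recursion as written in the paper, $G\cgnmultiply\cgup=\{\GL+\cgdoubleup\cgstar\mid\GR+\cgdoubledown\cgstar\}$, omits the recursive $\cgnmultiply\cgup$ on the options (it should read $\GL\cgnmultiply\cgup+\cgdoubleup\cgstar$, etc.); your argument ``$\GR+\cgdoubledown\cgstar\le\cgdoubledown\cgstar$'' relies on the literal (mis)stated form, whereas with the correct Norton product you need the inductive hypothesis on $\GR$ to control $\GR\cgnmultiply\cgup$ first.
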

The first item implies the second, with some work, and we use the uniqueness to define atomic weight.
\begin{definition}[Atomic Weight]\label{AW} 
The atomic weight of an all-small game $g$ is the unique game $G = \aw(g)$ such that $G\cgnmultiply \cgup =_\cgfarstar g$.
\end{definition}

There is a recursive formula for computing atomic weights. If $X$ is a set of all-small games, let $\aw(X)=\{\aw(x): x\in X\}$.

\begin{theorem}[Constructive Atomic Weight, \cite{S}]\label{thm:AWcon} 
Let $g$ be an all-small game, and let
$$G = \cg{\aw(\gL)-2}{\aw(\gR)+2}.$$ Then $\aw(g)=G$, unless $G$ is an integer. In this case, compare $g$ with far star. If
\begin{itemize}
\item $g\cgfuzzy \cgfarstar$, then $\aw(g)=0$;  
\item $g < \cgfarstar$, then $\aw(g)=\min\{n\in \mathbb Z: n\cggfuz G^L\}$; 
\item $g > \cgfarstar$, then $\aw(g)=\max\{n\in \mathbb Z: n\cglfuz G^R\}$.
\end{itemize}
\end{theorem}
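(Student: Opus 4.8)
The plan is to prove the formula by induction on the birthday of $g$, treating the recursion as already verified for every option, so that $\aw(\gL)$ and $\aw(\gR)$ are known game values. By Definition~\ref{AW} together with the uniqueness clause of the (unlabeled) multiplication lemma preceding it, it suffices to exhibit a single game $G$ with $G\cgnmultiply\up =_\cgfarstar g$; such a $G$ is then forced to equal $\aw(g)$. Via Theorem~\ref{Confs}, the relation $G\cgnmultiply\up =_\cgfarstar g$ is equivalent to the two normal-play inequalities $\downstar < g - G\cgnmultiply\up < \upstar$, i.e.\ to the two outcome assertions that Left wins $g-G\cgnmultiply\up-\downstar$ and that Right wins $g-G\cgnmultiply\up-\upstar$ (whoever moves first); these are what I would actually verify.

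First I would dispose of the generic case, where $G':=\cg{\aw(\gL)-2}{\aw(\gR)+2}$ is not equal to any integer. Here the definition of the uptimal product applies verbatim: $G'\cgnmultiply\up=\cg{(\aw(\gL)-2)+\cgdoubleup\cgstar}{(\aw(\gR)+2)+\cgdoubledown\cgstar}$. I would then play out $g-G'\cgnmultiply\up-\upstar$ (and symmetrically $g-G'\cgnmultiply\up-\downstar$), invoking on each option the inductive identities $\gL=_\cgfarstar\aw(\gL)\cgnmultiply\up$ and $\gR=_\cgfarstar\aw(\gR)\cgnmultiply\up$, the additivity $\aw(g+h)=\aw(g)+\aw(h)$ of Theorem~\ref{thm:AWprops}, and the standard infinitesimal inequalities $\up>0$, $\cgdoubleup\cgstar>0$ (and their negatives). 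The whole point of the ``$-2$'' and ``$+2$'' is that they supply exactly the slack needed: after any first-player move, the second player can either mirror into an already-established $=_\cgfarstar$-relation or fall back onto the buffer carried by $\cgdoubleup\cgstar$ (resp.\ $\cgdoubledown\cgstar$). This is a lengthy but essentially routine Tweedledum--Tweedledee case check.

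Next comes the eccentric case, where $G'$ collapses to an integer --- the simplest number in the appropriate one-sided position relative to $\{\aw(\gL)-2\}$ and $\{\aw(\gR)+2\}$. Now $G'\cgnmultiply\up=_\cgfarstar g$ may fail, because $=_\cgfarstar$ is strictly coarser than equality and $G'$ can be ``too central'', so one probes $g$ against $\cgfarstar$. If $g\cgfuzzy\cgfarstar$, I would show $\downstar<g<\upstar$ directly from the option bounds, giving $\aw(g)=0$. If $g<\cgfarstar$, I would verify $n\cgnmultiply\up=_\cgfarstar g$ for $n=\min\{m\in\mathbb Z: m\cggfuz G^L \text{ for every Left option } G^L\}$: the minimality of this $n$, pushed back through Theorem~\ref{Confs}, is exactly what forces the two required inequalities. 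The case $g>\cgfarstar$ is the negative of this, and $\aw(-g)=-\aw(g)$ halves the work.

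The step I expect to be the main obstacle is the eccentric case with $g\ne_\cgfarstar\cgfarstar$ (the $g<\cgfarstar$ and $g>\cgfarstar$ branches): one must pin down the \emph{exact} corrected integer, and this is delicate precisely because multiplication by $\up$ is not additive, so one cannot simply subtract $n\cgnmultiply\up$ from $g$ and quote the generic argument --- the interaction between the far-star comparison, the $\pm2$ shifts, and Theorem~\ref{Confs} has to be disentangled by hand. A secondary nuisance is checking that the number-simplicity rule correctly detects when $\cg{\aw(\gL)-2}{\aw(\gR)+2}$ is an integer in the situation where the $\aw(\gL)$ and $\aw(\gR)$ are themselves non-integer games.
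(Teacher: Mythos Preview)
The paper does not prove this theorem. Theorem~\ref{thm:AWcon} is stated with the attribution ``\cite{S}'' in its title and is immediately followed by an example, not a proof; it is quoted from Siegel's textbook as a known result and then \emph{applied} (chiefly in the proof of Theorem~\ref{thm:bipAW}). So there is no proof in the paper against which to compare your attempt.

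That said, your sketch is broadly the standard argument (as in \cite{WW,S}): reduce, via Definition~\ref{AW} and Theorem~\ref{Confs}, to showing $\downstar < g - G\cgnmultiply\up < \upstar$, handle the non-integer case by unfolding the definition of $G\cgnmultiply\up$ and using the $\pm 2$ slack together with induction on options, and in the integer case correct $G$ by comparing $g$ with $\cgfarstar$. One point to be careful about: in the eccentric case $g\cgfuzzy\cgfarstar$, you write that you would show $\downstar<g<\upstar$, but this is generally false (e.g.\ $g=\cgstar 2$); what you actually need is $g=_\cgfarstar 0$, equivalently $g+\cgfarstar\cgfuzzy 0$, which is immediate from $g\cgfuzzy\cgfarstar$ and $\cgfarstar+\cgfarstar=\cgfarstar$. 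Apart from that slip, your plan matches the textbook route.
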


\begin{example}
 We compute $\aw(\cgstar n)$ explicitly. By induction, we get $G=\{-2\mid 2\}=0$, which is an integer. Since $\cgfarstar \cgfuzzy \cgstar n$ (because $\cgfarstar +\cgstar n=\cgfarstar\cgfuzzy 0$), we get $\aw(\cgstar n)=0$.
\end{example}

 We are now ready to apply atomic weight theory to \bipass.

\section{A main {\sc bipass} theorem}\label{sec:main}
Our main result gives a simple formula for computing the atomic weight of {\sc bipass}. We restate Theorem~\ref{thm:bipAW}. 

\begin{theorem2}
Let $g$ be a disjunctive sum of  {\sc bipass} strips. Then $\aw(g)=\Delta(g)$.
\end{theorem2}
\begin{proof} 
It suffices to prove the result for one strip $s$, since the case for several strips then follows by additivity of $\Delta$ and atomic weights (see Theorem \ref{thm:AWprops}) respectively. Recall that $\Delta(s)=w(s)-b(s)$, the excess of white stones on the strip $s$. The base cases are the empty strip, together with the case $|s|=2$, when $s=\cgstar$, and in either case $\Delta(s)=\aw(s)=0$. We separate the proof in two cases, $\Delta>0$ and $\Delta=0$ (and then $\Delta<0$ will follow by symmetry).\\

\noindent Case 1, $\Delta>0$: Suppose that the result holds for all options of $s$, and denote $\Delta(s) = n > 0$, and we must prove that $\aw(s)=n$. We consider two cases, either (i) $b(s)=1$, or (ii) $b(s)>1$. 

For (i) note that the single black stone must be the leftmost stone, and moreover there are at least two white stones to the right of the single black stone. Therefore, for each move (Left or Right), $\Delta$ will decrease by at least one unit, until it reaches 0. 
Consider $s$: Right has only one option, and hence, with $g=s$ as in Theorem~\ref{thm:AWcon}, 
\begin{align}\label{eq:Rightmove}
G^R=n-1+2=n+1. 
\end{align}
Similarly, by induction combined with domination of Left options,
\begin{align}\label{eq:Leftmove}
G^L=n-1-2=n-3. 
\end{align}

Thus $G=n-2$, an integer. By combining \eqref{eq:Rightmove} with the last part of Theorem~\ref{thm:AWcon}, it suffices to prove  that $g>\cgfarstar$. But Left wins $g+\cgfarstar$ playing first, for example, by moving to $g$, since Left wins from $g^R$, by the choice of game. If Right starts and plays to $g^R+\cgfarstar$, note that $\Delta(g^R)\ge 0$. If $\Delta(g^R)=0$, then, since Right moved, the strip is $g^R=\bl\wh$, and so Left wins by playing to $g^R+\cgstar=\cgstar+\cgstar$, and otherwise Left wins by the previous argument. 

For (ii), note that since there are at least two  black stones, there are three types of options, either $\Delta$ stays the same, $\Delta$ increases by precisely one (Left plays \ub), or perhaps $\Delta$ decreases by precisely one (Right plays \ubp). If Right starts, and does not play \ubp, then by induction, Left can use this by playing an \ub and achieve $\aw\ge 2$. If Left starts, she can play a \ub and achieve at least $\aw\ge 1$ in her next move, which suffices to win, by Theorem~\ref{thm:AWprops}.

Thus, since each player would play an \ub, we get 
\begin{align}
G &= \cg{n+1-2}{n-1+2}\\ 
&= \cg{n-1}{n+1}=n,
\end{align}
which is again an integer. Thus we compare $g$ with $\cgfarstar$, and prove that $g>\cgfarstar$, which is analogous to the final paragraph of case (i).\\ 

\noindent Case 2, $\Delta = 0$: For the case $\Delta(s)=0$ with $|s|>2$, Left can play to $\Delta(s^L)=\aw(s^L)=1$, and Right can play to $\Delta(s^R)=\aw(s^R) = -1$ (by induction). If these options are optimal, we get $G=\cg{-1}{1}=0$, which is an integer. Similarly, $G$ remains an integer even if Left and/or Right does not play a \ubp. Since, we want to show that $\aw(s) = 0$, by Theorem~\ref{thm:AWcon}, it therefore suffices to show that $g\cgfuzzy \cgfarstar$, that is that the next player wins $g + \cgfarstar$. 

Suppose that Left starts by playing such that $\Delta(s^L)=\aw(s^L)=1$. Then, Right must respond to decrease the atomic weight, for otherwise Left wins, by using induction and Theorem~\ref{thm:AWprops}. At each sub-position with $\Delta(s)$ even, Left plays a \ubp, and Right most respond with a \ubp. At some point, Right will play to $\bl\wh+\cgfarstar$, and then Left responds with $\bl\wh+\cgstar\equiv 0$. The argument is analogous if Right starts.
\end{proof}

By this result, if there are several strips, Proposition \ref{prop:singlestrip} can easily be extended to the cases where the $\Delta$-excess is at least two. Moreover, the winning strategy is profoundly simple. Play any \ub on any strip, and this will maintain the \aw-lead. 
The next result rephrases the atomic weight properties in terms of $\Delta$-excess.
\begin{corollary}\label{cor:disjsumstrips}
Consider a disjunctive sum of strips $g$.
\begin{itemize}
\item If $\Delta(g)\geq 2$ then $g>0$;
\item If $\Delta(g)=1$ then $g\cggfuz 0$.
\item If $\Delta(g)=-1$ then $g\cglfuz 0$.
\item If $\Delta(g)\leq- 2$ then $g<0$.
\end{itemize}
Moreover, in case of $\Delta(g)\geq 1$ then a Left-winning strategy is to play a \ub on any strip, where this is still possible. If there is no \ubp, then Left may play a \nb on any strip until the game ends. 
\end{corollary}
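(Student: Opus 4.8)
The plan is to derive the four outcome statements directly from Theorem~2 (the main theorem, $\aw(g)=\Delta(g)$) combined with the atomic weight properties in Theorem~\ref{thm:AWprops}. For the first bullet, if $\Delta(g)\ge 2$ then $\aw(g)=\Delta(g)\ge 2$, so Theorem~\ref{thm:AWprops}(iv) gives $g>0$. For the second, $\Delta(g)=1$ gives $\aw(g)=1$, and Theorem~\ref{thm:AWprops}(iii) gives $g\cggfuz 0$. The remaining two bullets follow by the symmetry $\aw(-g)=-\aw(g)$ (Theorem~\ref{thm:AWprops}(ii)) together with $\Delta(-g)=-\Delta(g)$ (clear from Definition~\ref{def:Delta}, since negating a strip swaps black and white stones): apply the first two bullets to $-g$ and negate. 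So this portion is essentially immediate.

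The substantive part is the explicit strategy claim. Here I would argue by a potential-function / strategy-stealing argument on $\Delta$. Suppose $\Delta(g)\ge 1$ and it is Left's turn (either she is the first player, or Right has just moved). I claim Left can always either play a \ubp\ or, failing that, a \nbp, and maintain $\Delta\ge 1$ throughout. First, note that by Lemma~\ref{lem:oneunit}, whenever a strip has at least two black stones, Left has a \ubp\ on it that increases that strip's $\Delta$ by exactly one unit; and by Lemma~\ref{lem:oneunit} again, any single Right move on a strip changes $\Delta$ by at most one unit (a \ubp\ by Right decreases it by one; otherwise it does not increase, and in fact the analogous statement bounds the decrease). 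So if at the start of Left's turn $\Delta(g)\ge 1$ and some strip still admits a Left \ubp, Left plays it, reaching $\Delta\ge 2$; then Right's reply leaves $\Delta\ge 1$. Iterate. The subtle case is when $\Delta(g)\ge 1$ but no strip admits a Left \ubp: then every strip has at most one black stone, i.e. every non-trivial strip is a black-headed larva $\bl\wh^m$ or is $\bl\wh$ or has no black stone at all. On such a position each move by either player strictly decreases $\Delta$ (the larva case of Lemma~\ref{lem:oneunit}) — wait, this needs care: $\bl\wh$ contributes $0$ to $\Delta$ and admits only the \nbp\ to the empty game, which also leaves $\Delta$ unchanged at $0$ for that component. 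I would handle this endgame separately: once every strip is of the form $\bl\wh^m$, the position is a sum of ups $\up$-like values (value $(m-1)\upstar\star$ by Theorem~\ref{thm:bipass} with $n=0$), all of which are $\ge 0$ with at least one $>0$ when $\Delta\ge 1$, and the explicit finishing strategy is for Left to play \nbp s (equivalently, peel a white stone off a larva, or delete a $\bl\wh$) on components, mirroring Right, until the game ends with Left having the last move. I would verify that a sum $\sum (m_i-1)\upstar\star$ with $\sum m_i - (\#\text{strips}) \ge 1$, i.e. $\Delta\ge 1$, is an $\cggfuz 0$-position won by Left who moves to reduce, and that "play a \nbp" is a correct realization — this is really the classical fact that a positive number of $\up$'s plus some $\upstar\star$'s is won by Left, adapted to the \bip\ move set.

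The main obstacle I anticipate is precisely gluing the two regimes together cleanly: showing that following "always \ubp\ when available, else \nbp" never strands Left in a losing position at the transition, and that in the larva-only endgame the naive \nbp-strategy genuinely wins rather than merely preserving $\Delta\ge 1$ (since in that endgame $\Delta$ is forced to drop on every move, one must check the parity of remaining moves works out in Left's favour — this is where $\Delta\ge 1$, as opposed to $\Delta\ge 0$, is used). I would make this rigorous by induction on $|g|$: if Left has a \ubp, play it and quote Corollary~\ref{cor:disjsumstrips}'s outcome part (already proved) plus Lemma~\ref{lem:oneunit} to see Right cannot escape $\Delta\ge 1$; if not, reduce to the larva endgame lemma. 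Everything else is bookkeeping with $\Delta$ and appeals to Lemma~\ref{lem:oneunit}, Theorem~\ref{thm:bipass}, and Theorem~2.
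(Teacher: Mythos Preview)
Your derivation of the four outcome bullets is exactly the paper's approach: the paper says the corollary ``rephrases the atomic weight properties in terms of $\Delta$-excess'' and offers no further argument, so the intended proof is precisely Theorem~\ref{thm:bipAW} combined with Theorem~\ref{thm:AWprops}, which is what you do.

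For the strategy claim, the paper gives no justification whatsoever. Your instinct that the larva endgame is the delicate point is correct --- in fact it is more than delicate: the ``\nbp\ on any strip'' clause, read literally, is false. Take the single strip $g=\bl\wh\wh$, so $\Delta(g)=1$ and Left has no \ubp. The prescribed move is the \nbp\ $\bl\wh\wh\to\bl\wh$, after which Right plays $\bl\wh\to 0$ and Left, unable to move, loses; yet Left wins outright by jumping both white stones to $0$. The same failure recurs inside longer larva endgames (e.g.\ from $\bl\wh\wh\wh+\bl\wh$ with $\Delta=2$, following the stated strategy loses for Left regardless of which strip she picks first). Hence the induction on $|g|$ you sketch cannot close: the inductive hypothesis is already false at the base case $\bl\wh\wh$.

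What you \emph{can} salvage, in one line from the bullets you have already proved, is the \ubp\ half: if $\Delta(g)\ge 1$ and Left has a \ubp, playing it yields $\Delta\ge 2$, hence a position $>0$ by the first bullet, so Left wins from there regardless of subsequent play. It is only the larva-only endgame that requires a smarter move than an arbitrary \nbp. Treat the paper's ``Moreover'' as an informal heuristic rather than a precise claim, and do not try to prove it as stated.
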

Note that a disjunctive sum $g$, with $\Delta(g)=1$, may be a win for Right, playing first. This happens for example in the game $g=\bl\wh\wh+\bl\wh$. Moreover, Corollary~\ref{cor:disjsumstrips} does not cover the case when $ \Delta=0$. This is the subject of the next section.

\section{\bip with atomic weight 0}\label{sec:AW0}
Games with no \ub are very special. 
\begin{lemma}\label{lem:AW01}
Let $g$ be a disjunctive sum of \bip strips. If $\Delta(g)=0$ and Left has no \ubp, each strip is $\bl\wh$. If $\Delta(g)=1$ and Left has no \ubp, each strip is $\bl\wh$, except for exactly one of the strips, which is $\bl\wh\wh$. In general, if Left has no \ubp, then each strip is a black headed larvae. 
\end{lemma}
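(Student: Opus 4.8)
The plan is to reduce the whole lemma to one structural fact: a non-empty, fully alive \bip strip admits a Left \ubp if and only if it contains at least two black stones. First I would note that in any alive strip $s$ with $b(s)>0$ the leftmost stone is black and the rightmost stone is white, since otherwise the extreme stone of the wrong colour would face no opponent and hence be dead, contradicting the standing aliveness assumption. Now if $b(s)\ge 2$, the rightmost black stone of $s$ has only white stones to its right and, by the previous remark, at least one of them; hence Left may legally jump all of those white stones and land that black stone on the right end, which is a \ubp by Definition~\ref{def:unitbyp}. Conversely, a \ubp requires $b(s)\ge 2$ directly from the definition. Therefore ``Left has no \ubp on $g$'' is equivalent to ``$b(g_i)\le 1$ for every strip $g_i$ of $g$''.

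Next I would read off the shapes. We may assume each strip $g_i$ is non-empty, as empty strips are the identity for disjunctive sum and play no role; then $g_i$ has at least one stone, so $b(g_i)\ge 1$ because $b(g_i)>0 \iff w(g_i)>0$. Combined with the displayed equivalence this forces $b(g_i)=1$, and since the unique black stone must be the leftmost stone, $g_i=\bl\wh^{m_i}$ with $m_i=w(g_i)\ge 1$, i.e. $g_i$ is a black headed larvae. This is exactly the last assertion of the lemma.

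Finally I would use the excess. For $g_i=\bl\wh^{m_i}$ we have $\Delta(g_i)=m_i-1\ge 0$, so $\Delta(g)=\sum_i(m_i-1)$ is a sum of nonnegative integers. If $\Delta(g)=0$, every summand vanishes, so $m_i=1$ for all $i$ and each strip is $\bl\wh$. If $\Delta(g)=1$, exactly one summand equals $1$ and the rest vanish, so exactly one strip is $\bl\wh\wh$ and every other strip is $\bl\wh$.

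I do not expect a genuine difficulty; the only step that needs care is the equivalence in the first paragraph, namely verifying that ``at least two black stones'' in Definition~\ref{def:unitbyp} is both necessary and sufficient for the existence of a \ubp. Sufficiency is where the ``all stones are alive'' hypothesis is essential, since it is what guarantees that the rightmost black stone actually has a white stone to its right to jump over.
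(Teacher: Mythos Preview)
Your proof is correct and follows essentially the same route as the paper: first argue that the absence of a Left \ub forces each strip to be a black headed larvae $\bl\wh^{m_i}$, then read off the two special cases from $\Delta(g)=\sum_i(m_i-1)$. The only difference is presentational: the paper's proof is terser because the equivalence ``$b(s)\ge 2$ iff Left has a \ub on $s$'' has already been established in and around Lemma~\ref{lem:oneunit}, whereas you re-derive it explicitly (including the observation that in an alive strip the leftmost stone is black and the rightmost is white).
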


\begin{proof}
 If Left does not have a \ub in any strip in $g$, then each strip  
is of the form $\bl\wh\cdots\wh$, a black headed larvae. If $\Delta(G)=0$ then the number of black stones equals the number of white stones. It follows therefore that each strip is of the form $\bl\wh$. 

Similarly, if $\Delta(G)=1$, and Left has no \ubp, then there is exactly one strip with two white and one black stone. 
\end{proof}

It follows that, if no player has a \ubp, then $\Delta=0$, and so each strip is of the form $\bl\wh$.

In case of \bip, we can sometimes strengthen the atomic weight properties to include games with atomic weight 0. Namely, if \bip is played on an odd number of strips, with total atomic weight 0, then the first player wins. 

\begin{lemma}\label{lem:oddfirstwin}
Consider a disjunctive sum $g$ of an odd number of \bip strips. If $\Delta(g)=0$, then $g\cgfuzzy 0$.
\end{lemma}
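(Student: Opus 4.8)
The plan is to prove, in one induction, a statement slightly stronger than the Lemma. For a disjunctive sum $g$ of \bip strips let $I(g)=\sum_i I(g_i)$ be the total \emph{inversion count}, where $I(s)$ counts the pairs in a strip $s$ consisting of a black stone lying to the left of a white stone. Every legal move strictly decreases $I$ (the stone that moves passes at least one stone of the opposite colour, destroying at least that many inversions, and the subsequent removal of dead stones destroys only more), and a living strip always has $I(s)\ge 1$, with equality exactly for $s=\bl\wh$; so strong induction on $I(g)$ is available, and the minimal configurations are absorbed into the argument below. For a sum of $n$ \bip strips I would run the induction on the pair of claims: (A) if $n$ is odd and $\Delta(g)=0$ then $g\cgfuzzy 0$ — this Lemma; and (B) if $n$ is odd and $\Delta(g)=1$ then $g>0$ (equivalently, by negation, $n$ odd and $\Delta(g)=-1$ imply $g<0$). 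Claim (B) is exactly what is needed to justify the natural opening move for (A): Left's \ubp lands her in a position of $\Delta$-excess $1$, and Corollary~\ref{cor:disjsumstrips} only gives $\cggfuz 0$ there, which is not enough to conclude that Right, moving next, loses.

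For (A): if Left has no \ubp, Lemma~\ref{lem:AW01} together with $\Delta(g)=0$ forces every strip to be $\bl\wh$, so $g$ is a sum of $n$ (odd many) copies of $\cgstar$, hence confused with $0$; no recursion occurs here. Otherwise some strip carries at least two black stones, and also some strip carries at least two white stones — for if every strip had exactly one white, then the mirror of Lemma~\ref{lem:AW01} and $\Delta(g)=0$ would again force $g$ to be $n$ copies of $\cgstar$, contradicting that a strip has two blacks. Then Left's \ubp reaches $g^L$ with $n$ (odd) strips, $\Delta(g^L)=1$, and $I(g^L)<I(g)$, so by (B) $g^L>0$ and Right loses moving next; symmetrically Right's (white) \ubp reaches $g^R<0$ by the mirror of (B) and Left loses moving next. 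Hence $g\cgfuzzy 0$.

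For (B): to see Left wins moving first, note that if Left has no \ubp then the $\Delta=1$ clause of Lemma~\ref{lem:AW01} gives one copy of $\bl\wh\wh$ and $n-1$ copies of $\bl\wh$, and since $n-1$ is even this is $\cgup>0$; while if Left has a \ubp she plays it to reach $\Delta$-excess $2$, which is $>0$ by Corollary~\ref{cor:disjsumstrips}. To see Left wins moving second, I would split on Right's move on a strip $h_i$. If $h_i$ has at least two white stones, the mirror of Lemma~\ref{lem:oneunit} says the move is either the white \ubp, giving $\Delta=0$ on an odd number of strips with smaller $I$ (apply (A) inductively), or it leaves $\Delta=1$ on an odd number of strips with smaller $I$ (apply (B) inductively). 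If $h_i=\bl\wh$, the move is $\bl\wh\to 0$ and leaves $\Delta=1$, so the result is $\cggfuz 0$ by Corollary~\ref{cor:disjsumstrips}. If $h_i$ has exactly one white and more than two stones, it is a white-headed larva, on which every Right move \emph{raises} $\Delta$ (mirror of Lemma~\ref{lem:oneunit}), so $\Delta\ge 2$ afterwards and the result is $>0$ by Corollary~\ref{cor:disjsumstrips}. In every branch Left wins as the next player, so $g\ge 0$; combined with the first-player analysis, $g>0$.

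I expect the crux to be conceptual rather than computational: one must see that "always play a \ubp" cannot be justified from (A) alone, since a $\Delta=1$ disjunctive sum can be confused with $0$ (for instance $\bl\wh\wh+\bl\wh$), and that the right cure is to strengthen the induction to (B), where the odd-strip parity is precisely what upgrades $\cggfuz 0$ to $>0$. The secondary technical point is the case analysis in the second-player part of (B): one has to notice that a Right move played on a "black-heavy" strip (a white-headed larva) actually increases $\Delta$, so those branches close immediately via the two-ahead rule packaged in Corollary~\ref{cor:disjsumstrips}, and the only branches that recurse — Right playing on a strip with at least two white stones — are exactly the ones for which Lemma~\ref{lem:oneunit} pins the new $\Delta$-excess to $0$ or $1$, matching (A) and (B). A final routine check is that these recursive branches do not destroy a strip, so the parity of the number of strips is preserved.
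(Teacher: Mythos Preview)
Your proof is correct. The paper's own argument is briefer: it has the first player open with a \ubp\ and then asserts that ``the second player will respond with a \ubp, by the two-ahead-rule'', immediately reducing to a smaller instance of the same lemma. You have put your finger on exactly the soft spot in that sentence: after Left's \ubp\ the position has $\Delta=1$, and Corollary~\ref{cor:disjsumstrips} alone only yields $\cggfuz 0$, so one still owes an argument that Left wins as \emph{second} player from there. The paper leaves this implicit in the phrase ``by the two-ahead-rule'' (and it can indeed be unpacked into a short case analysis using Lemma~\ref{lem:AW01} when no further \ubp\ is available), whereas you package it cleanly as the auxiliary statement (B) and prove (A) and (B) by simultaneous induction. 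What you buy is transparency: every branch of the opponent's reply is pinned to an explicit inequality rather than to a heuristic. What the paper buys is brevity, at the cost of the reader having to reconstruct your (B) for themselves.

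One small imprecision to fix: in the second-player analysis of (B), when Right makes a non-\ubp\ move on a strip $h_i$ with $w(h_i)\ge 2$, the mirror of Lemma~\ref{lem:oneunit} only guarantees that $\Delta$ does not \emph{decrease}; it can strictly increase (for instance if Right moves the rightmost white of $h_i$ leftward, killing some black stones). So ``leaves $\Delta=1$'' should read ``leaves $\Delta\ge 1$''. This is harmless---if $\Delta\ge 2$ you invoke Corollary~\ref{cor:disjsumstrips} directly, and if $\Delta=1$ your inductive appeal to (B) stands, since the strip survives and the parity of $n$ is preserved exactly as you note. Your inversion count $I$ is a perfectly good induction parameter: a move swapping positions $i$ and $j$ lowers $I$ by exactly $j-i\ge 1$, and removing dead stones cannot increase it.
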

\begin{proof}
Since $\Delta=0$, if one of the players does not have a \ubp, then neither does the other player. And similarly,  if one of the players plays a \ubp, then the other player can counter with another (perhaps on another strip). If Left plays a \ubp, to say $g'$, then $\Delta(g')=1$, and so there is at least one component with more white stones than black stones. In this component, there is a \ub for Right. 

In the first case, if no player has a \ubp, then the sum is of the form $\bl\wh+\cdots +\bl\wh\equiv \cgstar$, and hence the first player wins. 

In the second case, if the first player plays a \ubp, then the second player will respond with a \ubp, by the two-ahead-rule, and if the number of components stays the same, the result follows by induction. Moreover, this couple of moves cannot decrease the number of components, because every \ub removes exactly one stone, and no \ub is possible from $\bl\wh$. 
\end{proof}

If the number of component strips is even, the question seems a bit harder. Some further classification is required. Example~\ref{ex:nonendbypass} provides an N-position, and on the other hand, it is easy to find P-positions, by the mimic strategy. Here, we prove that, if each strip is a larvae, i.e. each strip contains a \ub for exactly one of the players and $\Delta=0$, then the previous player wins. Here is an example, where the players have 7 stones each: $\bl\wh\wh+\bl\wh\wh+\bl\wh\wh+\bl\bl\bl\bl\wh\equiv 0$.  

\begin{proposition}\label{prop:even}
Suppose that $g$ is a sum of an even number of larvae. If $\Delta(g)=0$, then $g\equiv 0$. 
\end{proposition}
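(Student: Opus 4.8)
The plan is to show that $g \equiv 0$ by verifying that the second player wins $g$, exploiting the fact that every larva contains a unit-bypass for exactly one player. First I would set up the strategy-stealing/mimicking framework: partition the strips of $g$ into those that are black-headed larvae (a unit-bypass available only to Right, since Left cannot bypass — wait, a black-headed larva $\bl\wh\cdots\wh$ has no \ubp for Left, but Right has one precisely when there are at least two white stones) and white-headed larvae. Since $\Delta(g)=0$ and each larva has $|\Delta| \ge 1$ unless it equals $\bl\wh$ or $\bl\wh\wh$-type... more carefully: a black-headed larva $\bl\wh^m$ has $\Delta = m-1 \ge 0$, and a white-headed larva $\bl^m\wh$ has $\Delta = 1-m \le 0$. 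So $\Delta(g)=0$ forces the positive contributions from black-headed larvae to exactly cancel the negative contributions from white-headed larvae. The key structural point is that the even number of larvae, together with this cancellation, lets the second player pair up a move-response.

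The core of the argument is a case analysis on the first player's move. Suppose Left moves first. If Left plays inside a black-headed larva $\bl\wh^m$ (her only moves reduce $m$, decreasing $\Delta$), I would have Right respond by playing the unit-bypass in some white-headed larva, restoring $\Delta=0$ and keeping an even number of larvae-or-$\bl\wh$ components; then invoke induction on $|s|$ (the total number of stones). If Left plays a unit-bypass in some white-headed larva $\bl^{m}\wh \to \bl^{m-1}\wh$ (for $m \ge 2$), this increases $\Delta$ to $1$, and by Lemma~\ref{lem:AW01} / the two-ahead-rule there is a \ubp available for Right in a black-headed larva with surplus; Right plays it, returning to $\Delta = 0$ with an even count, and again we induct. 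The subtle bookkeeping is that a unit-bypass removes exactly one stone and never creates or destroys a component (the resulting strip is nonempty), and a larva that is reduced may turn into $\bl\wh$ — which is still covered by the induction hypothesis once we check $\bl\wh + \bl\wh \equiv 0$ and note $\bl\wh$ is not a larva but a degenerate case we should fold in. The base case is the empty sum, or $\bl\wh + \bl\wh \equiv \cgstar + \cgstar \equiv 0$.

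The main obstacle I anticipate is handling the moment when Left's move (or the forced exchange) reduces the number of genuine larvae — e.g. when a white-headed larva $\bl\bl\wh$ becomes $\bl\wh$, which is no longer a larva. I would address this by proving the slightly stronger statement that $g \equiv 0$ whenever $g$ is a sum of an even number of components, each of which is either a larva or equal to $\bl\wh$, with $\Delta(g)=0$; this is closed under the move-response pairs above and has the clean base case. A second, more delicate point: when Left plays a \emph{non}-unit-bypass \emph{non}-larva move — but by hypothesis every component is a larva or $\bl\wh$, and in a larva Left's only moves are the ones described (in a black-headed larva, shrinking the white tail; a white-headed larva gives Left exactly the unit-bypass, since her only black stones are consecutive at the left), so no exotic options arise, and this restriction is exactly what makes the parity argument go through. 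I would double-check that after Right's forced response the resulting position again consists only of larvae and $\bl\wh$-components with $\Delta=0$ and an even count, closing the induction; this last verification, while routine, is where an error would most likely hide.
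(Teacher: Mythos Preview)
Your overall architecture matches the paper's: show the second player wins by forcing the first player to either (i) make a non-\ub move that worsens $\Delta$, or (ii) make a \ub which the second player answers with a \ubp, preserving $\Delta=0$ and the even component count. Your Case~2 (Left plays her \ub in a white-headed larva, Right replies with his \ub in a black-headed larva) is exactly what the paper does, and your observation that \ub moves keep all components larvae-or-$\bl\wh$ and never change the component count is the right bookkeeping.

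The gap is in Case~1. You write that when Left plays in a black-headed larva $\bl\wh^m$ (decreasing $\Delta$), Right should ``respond by playing the unit-bypass in some white-headed larva, restoring $\Delta=0$.'' This is wrong on two counts. First, Right has no \ub in a white-headed larva $\bl^n\wh$: there is only one white stone, and the definition of \ub requires at least two. Second, and more importantly, Right should not try to \emph{restore} $\Delta=0$ after Left has voluntarily pushed it negative; any Right move that increases $\Delta$ is a move \emph{against} Right's interest. Concretely, your pairing can fail to exist: from $g=\bl\wh\wh\wh\wh+\bl\bl\wh+\bl\bl\wh+\bl\bl\wh$ (four larvae, $\Delta=0$), if Left plays $\bl\wh\wh\wh\wh\to 0$ then $\Delta=-3$, and no single Right move in any remaining strip restores $\Delta=0$. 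The paper's argument here is simply: once Left has decreased $\Delta$ to $\le -1$, it is Right's turn and $g\cglfuz 0$ by Corollary~\ref{cor:disjsumstrips} (the \aw\ bounds derived from the main theorem), so Right wins outright. No pairing is needed in this branch.

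There is also a case you gesture at but do not close: Left playing in a $\bl\wh$ component. This leaves $\Delta=0$ but an \emph{odd} number of components, so your inductive hypothesis (even count) no longer applies. The paper dispatches this via Lemma~\ref{lem:oddfirstwin}: with $\Delta=0$ and an odd number of strips the next player wins, which here is Right. You should invoke that lemma (or reprove it within your induction) rather than hope the strengthened statement absorbs it.
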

\begin{proof}
If a player does not play a \ubp, then they will worsen the $\Delta$-excess from their point of view. This follows by the restriction on the possible strips. By the two-ahead-rule, then they will lose. Therefore we may assume that they play a \ubp. If, at some point, this is not possible, then, by the conclusion that $\Delta$ alternates between 0 and say 1 (if Left started), each strip is now $\cgstar$. Since there is an even number of strips, the second player wins. 

If, at some point, the first player plays instead in a component of the form $\bl\wh=\cgstar$, then, by Lemma~\ref{lem:oddfirstwin}, they will lose anyway, since play in $\bl\wh$ does not change $\Delta$, but gives an odd number of components. 
\end{proof}

One can obtain narrow bounds, up to one unit, by adding $\star$, $\up$ and $\upstar$ to a disjunctive sum of \bip strips with Delta-excess 0. We collect all similar results in one place, and include the result of Lemma~\ref{lem:oddfirstwin} as the first part of item 1. The result of Proposition~\ref{prop:even} is included as the second part in item 4. 

\begin{theorem}\label{thm:AW0}
Consider a disjunctive sum of $n$ \bip strips, $g=\sum_{i=1}^{n} s_i$, with $\Delta(g)=0$. If $n$ is odd, then 
\begin{enumerate}
\item $g\cgfuzzy 0$, and in particular, if each single strip $s_i\equiv\cgstar$, then $g\equiv \star$,
\item $\downstar < g < \upstar$, and 
\item $\cgdown \cglfuz g\cglfuz \cgup$. 
\end{enumerate}
If $n$ is even then 
\begin{enumerate}
\item[4.] $g\cgfuzzy \star$, and in particular, if each single strip is a larvae, then $g\equiv 0$,
\item[5.] $\cgdown < g < \cgup$, and 
\item[6.] $\downstar \cglfuz g\cglfuz \upstar$. 
\end{enumerate}
\end{theorem}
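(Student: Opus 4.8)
The plan is to treat the six items as three pairs, and within each pair to reduce everything to outcome computations of disjunctive sums of the form $g+X$ where $X$ is one of $0$, $\star$, $\up$, $\upstar$, $\cgdown$, $\downstar$ (and their negatives). The unifying tool is Theorem~\ref{thm:bipAW}: since $\Delta(g)=0$, we have $\aw(g)=0$, so the constructive-$\cgfarstar$ equivalence (Theorem~\ref{Confs}) already tells us $\downstar < g < \upstar$, which is item~2. For item~3 I would invoke the standard fact that $\aw(g)=0$ together with $g\cgfuzzy 0$ (item~1, $n$ odd) forces $g$ strictly between $\cgdown$ and $\cgup$ is \emph{false} in general, so instead item~3 is the weaker fuzzy statement $\cgdown \cglfuz g\cglfuz \cgup$; this follows because $g-\cgdown$ and $\cgup - g$ are sums of \bip strips plus a single $\up$ or $\cgdown$, and one checks the relevant outcome by the \ub/\nb strategy. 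The even-$n$ items 5 and 6 are the exact mirror, shifted by a $\star$: since an even sum with $\Delta=0$ behaves like $\star$ rather than $0$ (item~4), one adds $\star$ to $g$ and applies the odd-case results to $g+\star$, which is a sum of $n+1$ strips — but $\star$ is not itself a \bip strip, so this reduction needs the reader to allow one extra $\cgstar$-summand, which is harmless since $\cgstar=\bl\wh$ \emph{is} a strip. So in fact $g+\bl\wh$ is a genuine sum of $n+1$ (odd) \bip strips with $\Delta=0$, and items 4,5,6 follow from 1,2,3 applied to it, using $\cgup\star=\upstar$, $\cgup=\upstar\star$, etc., from the notation set up before Lemma~\ref{lem:LiP}.

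The concrete work, then, is items 1 and 3 for odd $n$. Item~1 is Lemma~\ref{lem:oddfirstwin} for the $\cgfuzzy 0$ part; the refinement ``if each $s_i\equiv\cgstar$ then $g\equiv\star$'' is just $n\cdot\star=\star$ for odd $n$, a classical Nim fact. For item~3 I would argue: $\cgdown\cglfuz g$ means Left wins $g-\cgdown = g+\cgup$ moving first. Since $\Delta(g+\cgup)=\Delta(g)=0$ but $g+\cgup$ contains the non-\bip component $\cgup$, I treat $\cgup=\{0\mid\star\}$ directly: Left moves in $\cgup$ to $0$, leaving $g$, an odd sum with $\Delta=0$, which by item~1 Left wins moving second — wait, Left just moved, so now Right moves in $g$ and by item~1 the first player (Right) wins $g$. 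That is the wrong direction, so instead Left should \emph{not} collapse the $\cgup$ immediately; she plays a \ub in $g$ (possible unless every strip is $\cgstar$, handled separately), reaching $\Delta=1$, and then the two-ahead rule (Theorem~\ref{thm:AWprops}(iii)) combined with the extra $\cgup$ keeps her ahead. The symmetric inequality $g\cglfuz\cgup$, i.e. Left wins $g-\cgup=g+\cgdown$ moving first, is handled the same way with roles of the infinitesimal reversed. Item~2 as noted is immediate from Theorems~\ref{thm:bipAW} and~\ref{Confs}.

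The main obstacle I anticipate is the case analysis when $g$ has \emph{no} \ubp — by Lemma~\ref{lem:AW01} every strip is then $\bl\wh=\cgstar$ and $g\equiv n\cdot\star$. For odd $n$ this is $\star$, so the strict-comparison claims reduce to checking $\downstar<\star<\upstar$ and the fuzzy relations $\cgdown\cglfuz\star\cglfuz\cgup$, which are standard but must be stated; for even $n$ this is $0$, giving item~4's second clause and forcing the strict inequalities $\cgdown<0<\cgup$ and the fuzzy $\downstar\cglfuz 0\cglfuz\upstar$. The other delicate point is making sure the ``add a $\bl\wh$ summand'' trick for even $n$ genuinely lands inside the hypotheses of the odd case: $g+\bl\wh$ has $n+1$ strips (odd), $\Delta=0$, so items 1--3 apply verbatim, and then one divides out the $\star$ using $g+\star\cgfuzzy X \iff g\cgfuzzy X+\star$ and the identities $\upstar+\star=\cgup$, $\cgup+\star=\upstar$, $\downstar+\star=\cgdown$, $\cgdown+\star=\downstar$ recorded in the paper. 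I would write item~2 first (one line), then item~1 (cite Lemma~\ref{lem:oddfirstwin} plus the $n\cdot\star$ computation), then item~3 via the \ub/two-ahead argument, then derive 4,5,6 mechanically from 1,2,3 by the $+\bl\wh$ reduction.
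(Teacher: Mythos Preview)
Your overall architecture is sound and in some respects cleaner than the paper's. Your derivation of item~2 directly from Theorems~\ref{thm:bipAW} and~\ref{Confs} (since $\aw(g)=0$ forces $g=_{\cgfarstar}0$, hence $\downstar<g<\upstar$) is a genuinely different route: the paper instead proves item~2 by an explicit strategy argument (Left opens with a \ubp\ and invokes the two-ahead rule; if no \ubp\ exists, Lemma~\ref{lem:AW01} reduces to a sum of $\cgstar$'s). Your approach is shorter and in fact yields $\downstar<g<\upstar$ for \emph{all} $n$, not just odd $n$. Conversely, the paper's treatment of item~3 is slicker than yours: rather than a direct strategy in $g+\cgup$, it writes $g+\cgup\equiv g+\upstar+\cgstar$ and has Left collapse the extra $\cgstar$, landing in $g+\upstar>0$ by item~2. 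Your $+\,\bl\wh$ reduction for items~4,~5,~6 matches the paper exactly.

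There is one genuine gap. Item~4's second clause asserts that if every strip is a \emph{larva} (not merely $\bl\wh$), then $g\equiv 0$. Your plan only addresses the sub-case where every strip is $\bl\wh$, via the no-\ubp\ analysis; the general larva case requires the separate argument of Proposition~\ref{prop:even} (a larva admits only a \ubp\ or a $\Delta$-worsening move, so the two-ahead rule forces \ubp\ trading until all strips are $\cgstar$). You should either cite that proposition or reproduce its argument. A minor slip: in your item~3 discussion, ``$g\cglfuz\cgup$'' means \emph{Right} wins $g+\cgdown$ moving first (equivalently, apply $\cgdown\cglfuz(-g)$ and symmetry), not Left.
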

\begin{proof}
Lemma~\ref{lem:oddfirstwin} shows that the first player wins $g$ if $n$ is odd. If each strip $s_i\equiv \cgstar$, obviously $g\equiv \cgstar$. For case 4, assume that $n$ is even. 
Then $g+\star$ has an odd number of components, and item 1 applies since we can take, for example, $\bl\wh\equiv \star$. The second part of item 4 follows by Proposition~\ref{prop:even}.

Next, let us prove item 2, i.e. that Left wins $g+\upstar$, in case of $n$ odd. If Left starts by playing a \ubp, then she wins by the two-ahead-rule. Therefore, suppose that there is no \ubp. This leads to the first case of Lemma~\ref{lem:AW01}, and since n is odd, then she can play in $\upstar$ to obtain an even number of ``$\cgstar$''s. If Right starts, then, by the two-ahead-rule, he must play a \ubp, or remove the $\upstar$. If he plays a \ubp, then the total atomic weight of the strips becomes $-1$. Therefore Left has a \ubp, and she wins by induction. If there is no \ub and he removes the $\upstar$, then, since there is an odd number of strips and $\Delta=0$, Left wins playing first, by Lemma~\ref{lem:oddfirstwin}. 

For item 3, we use item 2. If Left removes a $\star$ in the game $g+\up+*+*=g+\upstar+*$, then since Left wins playing second in the game $g+\upstar$, by item 2, Left wins playing first in the game $g+\up+*+* \equiv g+\up$. The proofs of items 5 and 6 follow similarly by adding a $\star$ to the respective games.
\end{proof}

\section{Impossible normal play game values}\label{sec:imposs}
A simple problem is often raised in the analysis of games, but is only rarely answered: \textit{Given a ruleset, provide a simplest value, if it exists, that does not occur as a disjunctive sum of games in the ruleset.} 

For example, two, as yet, unanswered questions are:  ``does $\{1\mid 0, \{0 \mid -1\}\}$ occur in \textsc{toppling dominoes}?''\cite{FNSW};  and  ``When \textsc{clobber} is restricted to being played on a grid, does $*4$ occur?''\cite{AGNW}. In contrast,
Santos et al. \cite{SC} recently demonstrated that {\sc portuguese konane} is universal, meaning that it contains any short game value, and in particular it attains all all-small game values. On the other hand, we are not aware of any {\em all-small universal} all-small ruleset.

Since only games of integer atomic weights appear in \bipass, but the atomic weight can be any game, then the game is far from all-small universal, and hence there is an all-small game value of minimal birthday that does not appear as a position of a disjunctive sum of \bip strips. We will show that $*2$ does not appear as a disjunctive sum of \bipass.

Let us start by studying a single strip of \bipass.

\begin{lemma}\label{lem:nodouble0}
Let $g$ be  a single strip of \bipass. If $0\in\gL\cap \gR$ then $g=\bl\wh$. 
\end{lemma}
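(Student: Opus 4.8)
The plan is to characterise exactly which single strips admit a Left move to the empty position and which admit a Right move to it, and then intersect the two descriptions. Concretely, I will first show that \emph{for a single strip $g$ one has $0\in\gL$ if and only if $g$ is a black headed larvae $\bl\wh^{k}$ with $k\ge 1$}, and dually that $0\in\gR$ if and only if $g$ is a white headed larvae $\bl^{k}\wh$. The lemma then follows, since a strip that is simultaneously a black headed larvae and a white headed larvae has exactly one black stone and exactly one white stone, and, being alive, begins with a black stone and ends with a white stone, so it must be $\bl\wh$. The ``if'' direction of the characterisation is immediate: from $\bl\wh^{k}$ Left jumps the unique black stone past all $k$ white stones, and in the resulting word $\wh^{k}\bl$ every white stone has no black stone to its left while the black stone has no white stone to its right, so every stone is cleared and the position is $0$.

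For the ``only if'' direction I would rely on two elementary observations about {\sc bipass}. First, a move merely rearranges the stones already on the strip---it shifts the jumped stones and relocates the moved one---before dead stones are deleted, so it changes neither the number of black stones nor the number of white stones of the pre-cleanup word. Second, a word contains no alive stone precisely when no black stone has a white stone to its right and no white stone has a black stone to its left, i.e.\ precisely when it has the shape $\wh^{w}\bl^{b}$. Since $g$ is alive and non-empty it has at least one stone of each colour, hence so does the pre-cleanup word after any Left move; so ``Left moves $g$ to $0$'' forces that word to be exactly $\wh^{w}\bl^{b}$ with $w,b\ge 1$. Now a Left move takes a black stone $B$ at some position $i$, jumps $m\ge 1$ consecutive white stones immediately to its right, and places $B$ at position $i+m$, leaving all other positions fixed. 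In $\wh^{w}\bl^{b}$ everything to the right of $B$ is black, so everything in $g$ to the right of position $i+m$ is black; but $g$ ends in a white stone, which forces $i+m=|g|$, whence positions $i+1,\dots,|g|$ of $g$ are all white. If $g$ carried a second black stone it would sit, unmoved, at some position $<i$, while position $i$ of the post-move word now holds a white stone; that white stone would then have a black stone to its left and survive the cleanup, contradicting that the result is $0$. Hence $g$ has a single black stone, and, being alive, begins with it: $g=\bl\wh^{\,|g|-1}$.

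The dual statement $0\in\gR\iff g=\bl^{k}\wh$ follows from the colour-reversing involution, using $0=-0$ so that $0\in\gR$ is equivalent to $0\in(-g)^{\mathcal{L}}$. The one place that needs genuine care is the bookkeeping in the ``only if'' step: being explicit that reaching the empty position is the same as the post-move word having the shape $\wh^{w}\bl^{b}$ (which itself hinges on the correct notion of a stone being ``alive'' on a strip, namely that a black stone is relevant iff some white stone lies to its right and a white stone is relevant iff some black stone lies to its left), and then excluding the possibility of extra black stones lodged to the left of the stone that Left moves. Everything else is a one-line computation or an appeal to symmetry.
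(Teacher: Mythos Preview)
Your proof is correct and follows essentially the same route as the paper's: both argue that if $g$ contains at least two black stones then no Left move can produce the empty strip (since the unmoved black stone retains a white stone to its right), and dually for Right, so $g$ must have exactly one stone of each colour. The paper compresses this into two sentences, while you additionally establish the full ``if and only if'' characterisation $0\in\gL \iff g=\bl\wh^{k}$; that extra direction is not needed for the lemma, but your explicit bookkeeping (identifying the post-move word as $\wh^{w}\bl^{b}$ and locating the surviving white at position $i$) makes transparent a step the paper leaves to the reader.
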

\begin{proof}
Suppose $0\in\gL\cap \gR$. In this case both players have a move to 0, which, by Proposition~\ref{cor:P0}, is the empty strip.

Now, if there are two or more black pieces in $g$ then no Left move results in the empty strip. Similarly, there can only be one white piece in $g$. Hence $g=\bl\wh$.
\end{proof}

In particular, Lemma~\ref{lem:nodouble0} shows that $*2=\{0,*\mid 0,*\}$ and $\upstar=\{0,*\mid 0\}$ do not occur as a value of a single strip. 

\begin{proposition}\label{prop:gsingle}
Suppose that $g\equiv *2$ or $g\equiv \upstar$. Then $g$ is not a single strip of \bipass.
\end{proposition}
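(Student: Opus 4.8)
The plan is to split on the target value and use Theorem~\ref{thm:bipAW} to read off the $\Delta$-excess of a hypothetical strip. For $g\equiv\upstar$ this finishes at once: $\aw(\upstar)=1$ (as $\upstar=\cgup+\cgstar$), so Theorem~\ref{thm:bipAW} gives $\Delta(g)=1$ for any strip $g\equiv\upstar$, whence $g$ is an L-position by Proposition~\ref{prop:singlestrip}(iii); but $\upstar=\cg{\cgstar,0}{0}$ is an N-position (Right starts and wins by moving to $0$), so $\upstar\not>0$, a contradiction.

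For $g\equiv *2$ I would argue by induction on $|g|$. Since $\aw(*2)=0$ (as $*2=_\cgfarstar 0$), Theorem~\ref{thm:bipAW} gives $\Delta(g)=0$; thus $b(g)=w(g)=:n$, and, $g$ being alive, it begins with a black and ends with a white, so $|g|=2n$. The cases $n\le 2$ are settled by Table~\ref{tab:values} (the alive $\Delta=0$ strips of length at most $4$ are $\bl\wh$, $\bl\wh\bl\wh$ of value $\cgstar$ and $\bl\bl\wh\wh$ of value $\cg{\cgstar,\cgup}{\cgstar,\cgdown}$, none equal to $*2$), so assume $n\ge 3$ and that no strip of length $<2n$ has value $*2$. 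I would also record the auxiliary fact that \emph{no strip $s$ with $\Delta(s)=0$ satisfies $s>*2$}, and hence, by negating ($-s$ being again a $\Delta=0$ strip), none satisfies $s<*2$: if $s>*2$ then $s+*2>0$, while $\aw(s+*2)=0$ forces $s+*2<\upstar$ by Theorem~\ref{Confs}, so $\upstar>0$, contradicting the N-position $\upstar$.

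For the inductive step, let $g'$ be Left's unit-bypass option of $g$ (it exists since $n\ge 2$); then $\Delta(g')=1$ and $\aw(g')=1$. Take $*2$ in canonical form $\cg{0,\cgstar}{0,\cgstar}$; as $g+*2\equiv *2+*2\equiv 0$, this sum is a P-position, so Left's opening move to $g'+*2$ admits a winning Right reply. That reply is neither to $g'$ nor to $g'+\cgstar$: both have atomic weight $1$, hence are won by the player to move (Theorem~\ref{thm:AWprops}(iii)), and both leave Left on move. So Right replies to $g''+*2$ for some Right option $g''$ of $g'$, and this must be lost by the player to move (Left), i.e. $g''+*2\le 0$, so $g''\le *2$ and $\aw(g'')\le 0$. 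By the white analogue of Lemma~\ref{lem:oneunit} (valid as $g'$ has $n\ge 2$ white stones), every Right option of $g'$ has $\Delta\in\{0,1\}$; $\Delta=1$ would give atomic weight $1$, so $g''$ is Right's unit-bypass option of $g'$, a strip with $\Delta(g'')=0$ and $|g''|=2(n-1)<2n$. The auxiliary fact now gives $g''\not<*2$, so $g''\equiv *2$, contradicting the induction hypothesis.

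I expect the main obstacle to be the bookkeeping of dead stones behind the last paragraph: one must check that Left's unit-bypass in $g$ removes exactly one stone --- so $g'$ has $n-1$ black and $n$ white stones, still starts with a black, and hence admits a Right unit-bypass --- and that Right's unit-bypass in $g'$ removes exactly one stone, so that $|g''|=2(n-1)$ after deleting dead stones; one also needs the precise white analogue of Lemma~\ref{lem:oneunit}. Everything else (the atomic-weight identities, the exhaustiveness of the three kinds of Right reply, and the one-line auxiliary fact) is routine once one notes that $\upstar$ is an N-position.
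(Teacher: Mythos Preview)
Your proof is correct and takes a genuinely different route from the paper's. The paper derives the proposition in one line from Lemma~\ref{lem:nodouble0}: since the canonical forms of $*2$ and $\upstar$ each display $0$ among both the Left and the Right options, a single strip realizing either value would have $0$ (hence, by Corollary~\ref{cor:P0}, the empty strip) as a literal option for both players, forcing $g=\bl\wh$. Your $\upstar$ argument is even quicker than this: $\aw(\upstar)=1$ forces $\Delta(g)=1$, so Proposition~\ref{prop:singlestrip}(iii) makes $g$ an L-position, contradicting $\upstar\cgfuzzy 0$. For $*2$ you instead run an induction on $|g|$ through successive unit-bypasses, supported by the auxiliary bound $\downstar<s+*2<\upstar$ coming from Theorem~\ref{Confs}; this is longer but fully self-contained and does not lean on the passage from canonical-form options to literal options that the paper's one-liner takes for granted.

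One small imprecision to fix: you assert that every Right option of $g'$ has $\Delta\in\{0,1\}$. The white analogue of Lemma~\ref{lem:oneunit} only gives the lower bound $\Delta(g'')\ge 0$; Right can also \emph{raise} $\Delta$ by moving the rightmost white leftward past adjacent blacks, killing them (for instance, from $g'=\bl\wh\wh\bl\wh$ Right reaches $\bl\wh\wh\wh$ with $\Delta=2$). This does no damage to your argument: you already have $g''+*2\le 0$, hence $\aw(g''+*2)\le 0$, i.e.\ $\Delta(g'')\le 0$ by Theorem~\ref{thm:bipAW}; combined with $\Delta(g'')\ge 0$ this forces $\Delta(g'')=0$, and the uniqueness clause of Lemma~\ref{lem:oneunit} then identifies $g''$ as the unit-bypass. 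Simply replace ``$\Delta\in\{0,1\}$'' by ``$\Delta\ge 0$'' and the step is clean.
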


The next result considers any number of strips.

\begin{lemma}\label{lem:gh}
Consider two games $g$ and $h$ in a given ruleset, such that $h \equiv \cgstar+g$, and where the component $*$ does not appear in the canonical form of $g$. If $*2$ does not appear in its literal form in this ruleset, then $h\not\equiv *2$.
\end{lemma}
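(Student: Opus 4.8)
The plan is to argue by contradiction: assume $h$ is a position of the ruleset with $h\equiv *2$, and show that $\cgstar$ is then forced to appear in the canonical form of $g$ (and, correspondingly, that $*2$ is forced to occur in literal form), contrary to the hypotheses. The whole argument is pure $\mathrm{CGT}$; no special structure of \bipass is needed beyond what is quoted for the applications.

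First I would pin down the value of $g$. From $h\equiv\cgstar+g$ together with $h\equiv *2$ we get $\cgstar+g\equiv *2$; adding $\cgstar$ to both sides and using $\cgstar+\cgstar\equiv 0$ gives $g\equiv *3$. Next I would write down the canonical form of $*3$. By definition $*3=\cg{0,\cgstar,*2}{0,\cgstar,*2}$, and a short check shows this is already canonical: each of $0$, $\cgstar$, $*2$ is confused with each of the other two (so no option dominates another, since $*2-0$, $*2-\cgstar$, $\cgstar-0$ are all nonzero nimbers), and each of $0$, $\cgstar$, $*2$ is confused with $*3$ (so no Right option of a Left option is $\le *3$, i.e.\ nothing reverses out). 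Hence the canonical form of $g\equiv *3$ is $\cg{0,\cgstar,*2}{0,\cgstar,*2}$, in which $\cgstar$ visibly occurs — and, one level down, so does $*2$. This contradicts the assumption that the component $\cgstar$ does not appear in the canonical form of $g$, and we conclude $h\not\equiv *2$.

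The place the literal-form hypothesis enters is in making this reduction airtight in the form in which the lemma gets invoked (a disjunctive sum of strips, one summand of which is equivalent to $\cgstar$): there one controls ``$\cgstar$ absent from the canonical form of $g$'' only indirectly, and the cleanest way to contradict $g\equiv *3$ is to note that $*3$ has $*2$ as a (depth-one) follower, while $*2$ does not occur in literal form in the ruleset — with $\cgstar$ itself handled by Lemma~\ref{lem:nodouble0}/Proposition~\ref{prop:gsingle}. The one genuine computation is the domination/reversibility check for $*3$ sketched above, which is routine. I expect the real subtlety — and the reason both hypotheses must be carried along — to be bookkeeping: making sure the sense of ``$\cgstar$ appears in the canonical form of $g$'' is matched precisely to the parallel literal‑form statement ``$*2$ does not appear literally in the ruleset,'' so that whichever reading one adopts, the contradiction above is forced.
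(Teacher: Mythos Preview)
Your argument is correct and more direct than the paper's. Both proofs begin identically: from $h\equiv\cgstar+g$ and $h\equiv*2$ one obtains $g\equiv*3$. From there you observe that the canonical form of $*3$ is $\{0,\cgstar,*2\mid 0,\cgstar,*2\}$, in which $\cgstar$ visibly occurs as an option, contradicting the stated hypothesis that $\cgstar$ does not appear in the canonical form of $g$. The paper instead sets up an induction on literal game trees in the ruleset: the hypothesis ``$*2$ does not appear in literal form'' serves as the base case, the inductive hypothesis yields that no option of $g$ has value $*2$, and this is then declared incompatible with $g\equiv*3$.

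Your route uses only the canonical-form hypothesis and renders the literal-form hypothesis redundant for the lemma \emph{as stated} --- a point you rightly flag. The paper's inductive packaging is aimed squarely at the downstream theorem (where one ultimately wants $*2$ absent from the whole ruleset), but as written its last step leans on ``$g\equiv*3$ forces an \emph{option} of value $*2$,'' which is not true for arbitrary literal forms: for instance $\{0,\cgstar,\,*2+\cgup\mid 0,\cgstar,\,*2+\cgdown\}\equiv*3$ has no option equivalent to $*2$, although $*2$ does occur as a deeper follower. One needs ``follower'' rather than ``option'' there, and your canonical-form observation sidesteps this subtlety entirely.
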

\begin{proof}
A ruleset is closed under disjunctive sum, and taking options (but not necessarily under taking conjugate). The assumption that $*2$ does not appear in its literal form in the ruleset is the base case for induction, on literal form game trees that appear in this ruleset. Suppose that no game of smaller rank than $h$ has game value $*2$. Then $h = \cgstar+g$ implies $h\not \equiv *2$, because otherwise $g\equiv *3$. But $g$ has no option of value $*2$.
\end{proof}

\begin{theorem} \textsc{bipass} contains no position of value $*2$.
\end{theorem}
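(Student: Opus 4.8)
The plan is to run an induction on the birthday of the \bipass\ position, reducing the problem to facts already established: Theorem~\ref{thm:bipAW} pins the $\Delta$-excess, Proposition~\ref{prop:gsingle} kills the single-strip case, and Lemma~\ref{lem:gh} carries out the inductive step, with Lemma~\ref{lem:nodouble0} supplying its base case.

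So suppose $g$ is a \bipass\ position with $g\equiv *2$ of least birthday. By Theorem~\ref{thm:bipAW}, $\aw(g)=\Delta(g)$, and $\aw(*2)=0$, so $\Delta(g)=0$. By Proposition~\ref{prop:gsingle}, $g$ is not a single strip; write $g=s_1+\cdots+s_n$ with $n\ge 2$. For the base case of Lemma~\ref{lem:gh} one notes that $*2$ never occurs in literal \bipass\ form: the canonical form $*2=\{0,*\mid 0,*\}$ has $0$ available to both players, so any strip literally of that shape would be $\bl\wh=\{0\mid 0\}\ne *2$ by Lemma~\ref{lem:nodouble0}, and passing to a disjunctive sum only enlarges the option sets.

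The heart of the inductive step is to pull a $\bl\wh$ out of $g$. Once one knows $g=\bl\wh+h$ for a \bipass\ sum $h$ of strictly smaller birthday, the rest is immediate: $h\equiv *2+*=*3$, and since the canonical form of $*3$ is not of the form $\cgstar+(\text{anything})$, Lemma~\ref{lem:gh} gives $g\not\equiv *2$, contradicting our choice of $g$; equivalently, in $h+*2\equiv *$ the move that returns the game to value $0$ cannot be played inside the $*2$-component, so $h$ has an option of value $*2$ of smaller birthday than $g$, again a contradiction. To locate the $\bl\wh$ component I would use the rigid option structure of the nimber $*2$ together with Theorem~\ref{thm:AW0}: $g$ must have (a single move away) a position of value $0$ and one of value $*$; by Lemmas~\ref{lem:oneunit} and~\ref{lem:AW01} a \bipass\ move that both preserves the $\Delta$-excess and lands on a value with atomic weight $0$ either deletes no stone — leaving $n$ strips with $\Delta=0$ — or is precisely $\bl\wh\to 0$. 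In the first alternative the resulting $n$-strip, $\Delta=0$ position contradicts Theorem~\ref{thm:AW0} (part~1 if $n$ is odd, since a P-position cannot be a first-player win; part~4 if $n$ is even, with the auxiliary component $\cgstar$, since a value-$*$ position cannot be confused with $\cgstar$), so the second alternative must occur, i.e.\ $g$ has a $\bl\wh$ component.

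The main obstacle is this last paragraph: pinning down the $\bl\wh$ component. The options exposed by the canonical form of $*2$ are options of the whole sum, not of any one strip, and a winning \bipass\ move can raise the $\Delta$-excess (a unit-bypass), so the delicate point is to show that the value-$0$ and value-$*$ behaviour forced by the canonical form of $*2$ cannot be realized except by collapsing a $\bl\wh$ strip or by staying at $n$ strips — where the parity dichotomy of Theorem~\ref{thm:AW0} then bites. With that rigidity in hand the induction closes through Lemma~\ref{lem:gh}.
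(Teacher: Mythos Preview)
Your plan tracks the paper's proof closely: minimal counterexample, rule out a single strip via Proposition~\ref{prop:gsingle}, then split according to whether a $\cgstar$-component can be factored off (handled by the induction behind Lemma~\ref{lem:gh}) or not (handled by the parity dichotomy of Theorem~\ref{thm:AW0}, which forces a strip-eliminating move that cannot preserve $\Delta$). The paper runs the two phases in the opposite order---it first peels off \emph{any} strip of value $\cgstar$ (not only $\bl\wh$) via Lemma~\ref{lem:gh}, and then applies the parity argument to the residue---but the ingredients are the same.

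One concrete slip: your direct appeal to Lemma~\ref{lem:gh} does not go through. The hypothesis of that lemma is that $\cgstar$ not occur among the \emph{options} in the canonical form of the complementary summand; your complementary summand $h$ satisfies $h\equiv\cgstar3$, whose canonical form $\{0,\cgstar,\cgstar2\mid 0,\cgstar,\cgstar2\}$ certainly lists $\cgstar$, so the hypothesis fails. Your ``equivalently'' reformulation is the right idea, and is exactly the inductive content of Lemma~\ref{lem:gh}: from $h\equiv\cgstar3$ you want a \emph{literal} option of $h$ with value $\cgstar2$, contradicting minimality of $g$. But note that in general a game can be equivalent to $\cgstar3$ without $\cgstar2$ appearing among its literal options (reversibility can hide it), so this step is not automatic---it is another instance of precisely the obstacle you flag in your final paragraph.

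That flagged obstacle---passing from the canonical-form options $0,\cgstar$ of $\cgstar2$ to actual \bip\ moves landing on positions with those exact values, rather than, say, on a $\Delta=1$ position after a unit-bypass---is where the genuine work lies, and your sketch does not close it. The paper is equally terse here: it too writes ``a move to $0$'' and ``a move to $\cgstar$'' as though such literal options are immediately available. So your outline is at the same level of rigor as the paper's on this point; what remains in both is to argue that the winning first moves forced by $g\equiv\cgstar2$ and $g+\cgstar\equiv\cgstar3$ cannot all be realised by unit-bypasses.
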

\begin{proof}

Let $h$ be the smallest \bip position, in terms of literal form game tree, with $h\equiv *2$. By Proposition~\ref{prop:gsingle} the game is not a single strip.
 
 Note that $h\equiv\{0,*\mid 0,*\}$ and that the atomic-weight of each of $h$, $0$ and $*$ is zero; hence, in each case, $\Delta = 0$. Moreover, by Table~\ref{tab:values}, the literal form $*2$ does not appear in a disjunctive sum of \bipass. Then Lemma~\ref{lem:gh} implies that any game of the form $*+g$ differs from $*2$. 

Now suppose that $h\equiv *2$ is a disjunctive sum of strips none of which is equivalent to $*$. By Theorem \ref{thm:AW0}, a move to 0 must be to an even number of strips, and likewise the move to $*$ must be to an odd number of strips. Therefore, each player has a move that eliminates a strip. The only strip both players
 can eliminate is $\bl\wh$ but that is contrary to our assumption on $h$. Hence,  the strip Left eliminates is a black headed larvae and the strip Right eliminates is a white headed larvae.
 However, eliminating a larvae changes the atomic-weight (Lemma \ref{lem:oneunit}). Since $h$, $0$ and $*$ all have atomic-weight 0, there is no move that eliminates a strip and leaves the atomic-weight at zero. Therefore $h$ is not of this form either. Thus, there is no \bip position of value $*2$.
\end{proof}

The proof becomes very neat by using Theorem~\ref{thm:AW0}, but one can also prove the last part by using instead Theorem~\ref{thm:bipass}, as follows. Consider a position of value $*2$, and not of the form $\cgstar +g$. 
One option must be to $\cgstar$, which, by Theorem~\ref{thm:bipass} then forces a position of the form $\bl\wh^n\bl\wh\bl^n\wh+g$, with $\Delta(g)=0$, since a \ub is not possible. This construction implies that $g\equiv 0$. But $\bl\wh^n\bl\wh\bl^n\wh\not\equiv *2$, because in this single strip there is no move to 0.

\section{Mis\`ere play}\label{sec:mis}
We prove a ``two-ahead rule'' for misere play {\sc bipass}. Let us begin with an example. We are not aware of any general such theory in mis\`ere play. 
\begin{example}\label{ex:mis}
Study the game $s=\bl\wh\wh\wh$ in mis\`ere play. Then $\Delta(s)=2$. If Left starts, she wins by playing to $\bl\wh$. If Right starts, then he must move to $\bl\wh\wh$, whereupon Left responds to $\bl\wh$, and wins again. This Left strategy obviously holds for any single strip of the form $\bl\wh^{n}$, if $n>2$ (but it fails when $n=2$ and Right starts).
\end{example}
The observation in Example~\ref{ex:mis} generalizes to play on arbitrary strips in a disjunctive sum as long as the total $\Delta$-excess $\ge 2$. 

Mis\`ere dicot play behaves better than the general class. In particular, we have the following useful analogue to normal-play. In mis\`ere play as in normal play, the game in which both players have a single move to end the game, is called `star' i.e. $*=\{0\mid 0\}$.
\begin{lemma}[\cite{MA}]\label{lem:allen}
Consider dicot mis\`ere play. Then $\star+\star\equiv 0$.
\end{lemma}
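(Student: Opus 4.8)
The plan is to unwind the definition of dicot misère equivalence and verify the identity by a direct structural induction. Recall that in the dicot universe $\mathcal D$ one has $g\equiv h$ precisely when $o^-(g+X)=o^-(h+X)$ for every dicot $X$, where $o^-$ denotes the misère outcome; since $\mathcal D$ is closed under disjunctive sums and under taking options, this is an induction that stays inside the universe. So the goal is to show $o^-(\star+\star+X)=o^-(X)$ for all dicot $X$. Exactly as in normal play, the misère outcome of a game $Y$ is determined by the two bits ``Left moving first wins $Y$'' and ``Right moving first wins $Y$'' (the four combinations being $\N,\L,\R,\P$). Because $\star+\star$ is its own conjugate and conjugation preserves dicots, it therefore suffices to prove the single statement
\[(\dagger)\qquad\text{Left moving first wins }\star+\star+X\iff\text{Left moving first wins }X,\]
the Right-hand analogue following by applying $(\dagger)$ to $\mathrm{conj}(X)$.

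I would prove $(\dagger)$ by induction on the game tree of $X$. The base case $X=0$ is the computation $o^-(\star+\star)=\N=o^-(0)$: the empty game is an $\N$-position in misère (the player to move cannot, hence wins), while from $\star+\star$ the only move is to $\star$, which is a $\P$-position, so the first player of $\star+\star$ wins. For the inductive step, first suppose Left wins $X$ moving first, say by $X\to X^L$ with $o^-(X^L)\in\{\P,\L\}$; then from $\star+\star+X$ Left moves to $\star+\star+X^L$, and by the induction hypothesis $o^-(\star+\star+X^L)=o^-(X^L)\in\{\P,\L\}$, so Left wins. Conversely, suppose Left wins $\star+\star+X$ moving first, via a move to $Z$ with $o^-(Z)\in\{\P,\L\}$. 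If $Z=\star+\star+X^L$, the induction hypothesis gives $o^-(X^L)=o^-(Z)\in\{\P,\L\}$, so Left wins $X$ by moving to $X^L$. Otherwise $Z=\star+X$ (Left played in one of the two stars); here one argues directly: $\star+X$ has the Right option $0+X=X$, and $o^-(\star+X)\in\{\P,\L\}$ means Left wins $\star+X$ when Right moves first, so in particular Left wins the position reached after Right collapses the surviving star, namely $X$ with Left to move --- that is, Left wins $X$ moving first.

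The delicate point, and the only place the dicot hypothesis genuinely bites, is this last case: one must be certain that moving into one of the two $\star$s is never actually useful, and the reason is exactly that the opponent can annihilate it with the other $\star$, returning to $X$ with the mover still on the move --- a move that, within the universe, causes no harm. Making that cancellation argument legitimate is precisely why the induction has to be run against \emph{all} dicot $X$ and why one needs $\mathcal D$ to be closed under options (so that $X^L$ and $X^R$ are again in the universe). With those facts in hand the two cases close the induction, yielding $\star+\star\equiv 0$; this also matches, and can alternatively be recovered from, the general dicot misère framework of \cite{MA}.
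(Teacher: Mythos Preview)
Your argument is correct. The paper does not supply its own proof of this lemma; it simply quotes the result from \cite{MA} and uses it as a black box in the mis\`ere section. So there is nothing to compare against on the paper's side.

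Your proof is a clean self-contained verification that $o^-(\star+\star+X)=o^-(X)$ for every dicot $X$, by induction on the birthday of $X$. Both directions of the inductive step are sound. The only case that needs care is when Left's winning move from $\star+\star+X$ is into one of the stars, landing at $\star+X$; your handling of it is the right one: from $o^-(\star+X)\in\{\P,\L\}$ you use that Right's reply $\star+X\to X$ must leave Left (now on move) in a winning position, hence Left wins $X$ moving first. Note that this step does not invoke the induction hypothesis at all, so the induction is well-founded (you only appeal to the hypothesis for genuine options $X^L$). The reduction to the single statement $(\dagger)$ via conjugation is also fine, since $\star+\star$ is self-conjugate and the dicot universe is closed under conjugation. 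In short, you have supplied a valid proof where the paper offers only a citation.
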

\begin{theorem}
Consider a disjunctive sum of {\sc bipass} strips $g$ under mis\`ere play. If $\Delta(g)\ge 2$, then Left wins. 
\end{theorem}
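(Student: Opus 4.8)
The plan is to mimic the normal-play argument of Theorem~\ref{thm:AWprops}(iv)/Corollary~\ref{cor:disjsumstrips} (the ``two-ahead rule''), but tracking the mis\`ere ending condition carefully, and using Lemma~\ref{lem:allen} ($\star+\star\equiv 0$ in mis\`ere dicot play) as the replacement for the normal-play fact that a sum of $\star$'s is a $\P$-position when the count is even. I would set up an induction on the total number of stones, $\sum_i |s_i|$, and prove the stronger statement: if $\Delta(g)\ge 2$ then Left wins $g$ regardless of who moves first (i.e. $g$ is an $\L$-position under mis\`ere play). The base cases are the small configurations where $\Delta(g)\ge 2$ is already forced by few stones, e.g. a single strip $\bl\wh\wh\wh$ (handled in Example~\ref{ex:mis}) and $\bl\wh\wh+\bl\wh\wh$; here one checks directly that Left wins going first (move to reduce $\Delta$ to $2$, or to reach a $\star+\star$ position eventually) and going second.

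The inductive step splits on who moves. \textbf{Right moves first.} By Lemma~\ref{lem:oneunit}, any Right move changes $\Delta$ by at most one unit (a \ubp\ decreases it by one; any other move is a neighbor-type move that weakly decreases it, or in a black headed larvae decreases $\Delta$), so after Right's move we have $\Delta(g')\ge 1$. Left then plays a \ubp\ on any strip that still admits one (such a strip exists because $\Delta(g')\ge 1$ forces a component with $w>b$, and if $w>b$ with $b\ge 1$ one has $\ge 2$ black stones unless the strip is a black headed larvae; in the latter case Left instead plays a neighbor-bypass that pushes $\Delta$ back up or eliminates that larvae to reach $0$), restoring $\Delta\ge 2$, and we are done by induction since the stone count strictly dropped over the two moves. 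The only subtlety is the terminal situation: if after Right's move no \ubp\ is available anywhere, then by Lemma~\ref{lem:AW01} every strip is a black headed larvae, and with $\Delta(g')\ge 1$ at least one strip is $\bl\wh\wh$ (or longer); Left eliminates that strip (if it is $\bl\wh\wh$, moving to $0$), and then we are in a sum of $\bl\wh\equiv\star$'s; I would invoke Lemma~\ref{lem:allen} together with the mis\`ere fact that an odd number of $\star$'s is a first-player \emph{win} but an even number is a \P-position, and check the parity works out in Left's favour because the original $\Delta\ge 2$ forces enough larvae. \textbf{Left moves first.} Left plays a \ubp\ somewhere (possible since $\Delta(g)\ge 2$ guarantees a strip with $\ge 2$ black stones, unless all such strips are black headed larvae, in which case there is a larva of length $\ge 4$ to eliminate, again dropping $\Delta$ by at most the larva's excess while keeping $\Delta\ge 2$, or we fall into a pure $\star$-sum handled as above), reaching $g'$ with $\Delta(g')\ge 1$; if $\Delta(g')\ge 2$ apply induction directly, and if $\Delta(g')=1$ we have reduced to showing Left (now to move second) wins a $\Delta=1$ position — which I would fold into the same induction by strengthening the hypothesis slightly, or handle via the ``Right moves first with $\Delta\ge1$'' sub-case above.

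The main obstacle I anticipate is exactly this $\Delta=1$ boundary and the terminal $\star$-sum bookkeeping: unlike normal play, in mis\`ere play a single leftover $\star$ is a \emph{loss} for the player to move, so the parity of the number of $\bl\wh$ components at the end matters, and one must verify that the ``two-ahead'' lead of $\Delta$ always leaves Left on the right side of that parity (equivalently, that Left can always arrange for an even number of $\star$-components to face Right). I expect the cleanest fix is to strengthen the induction hypothesis to: ``if $\Delta(g)\ge 2$ then Left wins moving first or second, and moreover if $\Delta(g)=1$ and it is Right's turn, Left wins,'' and to prove both clauses simultaneously by induction on the stone count, with Lemma~\ref{lem:allen} and Lemma~\ref{lem:AW01} doing the work at the leaves. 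Once the $\Delta=1$-to-move-is-Right clause is in place, every case above closes by a two-move descent.
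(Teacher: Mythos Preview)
Your overall plan (induct on total stone count, maintain the $\Delta$-lead via \ub es, fall back on Lemma~\ref{lem:allen} at the terminal $\star$-sum) is exactly the paper's approach. However, several of your concrete steps are wrong, and these are not just loose ends but genuine errors that would break the argument.

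First, you have the effect of a Left \ub\ on $\Delta$ backwards. By Lemma~\ref{lem:oneunit}, a Left \ub\ \emph{removes a black stone} and hence \emph{increases} $\Delta$ by exactly one. So in the ``Left moves first'' case, if a \ub\ exists, Left plays it and obtains $\Delta(g^L)=\Delta(g)+1\ge 3$, and induction applies immediately; there is no drift down to $\Delta=1$ and no need to strengthen the hypothesis. Relatedly, the sentence ``$\Delta(g)\ge 2$ guarantees a strip with $\ge 2$ black stones'' is false: $\Delta\ge 2$ means \emph{more white than black}, and the obstruction is precisely that every strip may be a black headed larva. The paper handles this residual case directly: if in addition no strip is $\bl\wh$, then with $\Delta(g)=2$ the position is literally $\bl\wh\wh\wh$ or $\bl\wh\wh+\bl\wh\wh$, and Left wins by inspection; if $\Delta(g)>2$, Left shortens one larva by a single step so that $\Delta$ drops by only one and induction still applies.

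Second, your mis\`ere $\star$-bookkeeping is inverted. In mis\`ere play a single $\star$ is a \P-position (the mover is forced to $0$ and then the opponent, unable to move, wins), and by Lemma~\ref{lem:allen} an even number of $\star$'s is $\equiv 0$, which in mis\`ere is an \N-position. So your plan ``Left eliminates the $\bl\wh\wh$ and is then facing a sum of $\star$'s'' does not automatically work: whether the resulting $\star$-sum is good for Left depends on a parity you do not control. The paper sidesteps this by first using $\star+\star\equiv 0$ to reduce $(\bl\wh)^{k-1}+\bl\wh\wh$ to either $\bl\wh\wh$ or $\bl\wh+\bl\wh\wh$, and then observing that in either reduced game Left has a move to a single $\bl\wh\equiv\star$, a mis\`ere \P-position. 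Equivalently, in the actual position Left chooses between the moves $\bl\wh\wh\to 0$ and $\bl\wh\wh\to\bl\wh$ so as to leave an \emph{odd} number of $\star$'s; your proposal fixes one of these two moves and so fails half the time.

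Once these two points are corrected, the proposed strengthening to a ``$\Delta=1$, Right to move'' clause becomes unnecessary: the only place $\Delta=1$ arises is after a Right \ub\ from $\Delta=2$, and there either Left has a \ub\ (restoring $\Delta=2$) or Lemma~\ref{lem:AW01} forces the explicit $(\bl\wh)^{k-1}+\bl\wh\wh$ shape handled above.
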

\begin{proof}
Let $g=\sum_{i=1}^k s_i$. We begin by proving that Left wins if Left starts. If there exists an index $i$ such that $s_i=\bl\wh$, then Left plays in $\bl\wh$ and wins by induction. Otherwise, if Left has a \ubp, then Left plays it,  giving $\Delta(g^L)=\Delta(g)+1\ge 2$. Again, Left wins by induction. 

If Left has no \ubp, and no strip is of the form $\bl\wh$, then $g=\sum \bl^{n_i}\wh$, with each $n_i\ge 2$. If $\Delta(g)>2$, then $\Delta(g^L)\ge 2$ and Left wins by induction. If $\Delta(g)=2$, then $g=\bl\wh\wh\wh\;$ or $g=\bl\wh\wh+\bl\wh\wh$, and, either way, Left wins going first. 

Finally, we prove that Left wins if Right is going first. If Right does not make a \ub then Left wins by induction. Suppose Right plays a \ubp. If $\Delta(g^R)\ge 2$, then Left wins by induction. So suppose $\Delta(g^R)=1$. If Left has a \ub, then Left wins by induction. Otherwise, by Lemma~\ref{lem:AW01} (which holds in mis\`ere play as well), $g^R=(\!\bl\wh)^{k-1}+\bl\wh\wh$. But, by Lemma~\ref{lem:allen} (\bip is a dicot ruleset), we have $\star+\star\equiv 0$, and so $g^R= \bl\wh+\bl\wh\wh\;$ or $g^R=\bl\wh\wh$. In both cases Left wins by moving to $\bl\wh$.
\end{proof}
In analogy with impartial theory, we may deduce that \bip is in essence a `tame' game: its behaviour is not too different from that for normal play when the atomic-weight is other than $-1$, $0$ or $1$. 

\section{Further reflections}
How have atomic weights proven useful in other games? In \textsc{clobber} they were used to show that the game is NP-hard. They are easy to calculate in \textsc{cutthroat stars} and \textsc{yellow-brown hackenbush} and are integers. In these two games, atomic-weights are unnecessary since a complete solution is known. However, as \bip exemplifies, integer atomic-weights does not assure that the values are easy to calculate. For every non-empty position in \textsc{hackenbush sprigs} that is not covered by the atomic-weight two-ahead rule,  the value of an extra parameter determines the outcome.  (See \cite{C} for the normal-play version, and \cite{MMN} considers the mis\`ere version.) Nothing is known about the atomic weights of \textsc{partizan euclid} although the plot of the mean-values of the atomic weights of the first 10,000 positions is intriguing. 

\bip originated in the Games-at-Dal workshop, 2012 as a game on Ferrer's Diagrams: Left is allowed to remove a portion of a row and Right a portion of a column, provided what remains is a Ferrer's diagram. 
In Figure~\ref{fig:ferdia}, the starting position in Example~\ref{ex:singleline} is encoded as a Ferrer's Diagram.
\begin{figure}[ht]
\begin{center}
\begin{tikzpicture}[scale=0.65, line width=1pt, rotate=90]
  \draw (0,0) grid (4,1);
  \draw (0,0) grid (1,-1);
  \node[left] (A) at (4.23,0.15) {$\bl$};
  \node[left] (A) at (3.5,-.58) {$\wh$};
  \node[left] (A) at (2.5,-.58) {$\wh$};
  \node[left] (A) at (1.6,-.58) {$\wh$};
  \node[left] (A) at (1.25,-.95) {$\bl$};
  \node[left] (A) at (.5,-1.58) {$\wh$};
\end{tikzpicture}\caption{{\sc bipass} has an equivalent Ferrer's diagram interpretation.}\label{fig:ferdia}
\end{center}
\end{figure}
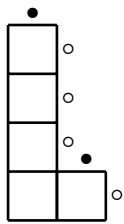
In Figure~\ref{fig:ferplay}, we play out the game from Example~\ref{ex:singleline}, using this encoding.

\begin{figure}[ht!]
\begin{center}
\begin{tikzpicture}[scale=0.65, line width=1pt, rotate=90]
  \draw (0,0) grid (4,1);
  \draw (0,0) grid (1,-1);
  \node[left] (A) at (1,-2) {$\rightarrow$};
\end{tikzpicture}
\begin{tikzpicture}[scale=0.65, line width=1pt, rotate=90]
  \draw (0,0) grid (2,1);
  \draw (0,0) grid (1,-1);
\node[left] (A) at (1,-2) {$\rightarrow$};
\end{tikzpicture}
\begin{tikzpicture}[scale=0.65, line width=1pt, rotate=90]
  \draw (0,0) grid (2,1);
  \node[left] (A) at (.5,-1) {$\rightarrow$};
\end{tikzpicture}
\begin{tikzpicture}[scale=0.65, line width=1pt, rotate=90]
\draw (0,0) grid (0,1);
\end{tikzpicture}\caption{In this Ferrer's Diagram interpretation of {\sc bipass}, Left begins by removing two pieces of the first column, and then Right plays by removing one piece of the first row. At last Left removes the first column.}\label{fig:ferplay}
\end{center}
\end{figure}
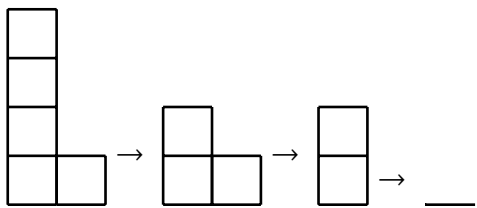

The impartial \textsc{welter's game} is played on a strip with stones, and it is equivalent to removing hooks from a Ferrer's Diagram. The analysis involves a beautiful application of frieze patterns. See \cite[Volume 3, Chapter 15]{WW}. 

For both games (and other Ferrer's diagram games) the translations to a stones-on-a-strip is obtained by tracing out the envelope of the diagram putting a black stone on a horizontal unit line and a white stone on a vertical unit line. See Figure~\ref{fig:ferdia}. (In the case of the \textsc{welter's game} and other impartial games, the white stones become the empty spaces.) 

As anticipated by Section~\ref{sec:imposs}, we suggest the following problem.

\begin{problem}
Find an all-small ruleset that is all-small universal.
\end{problem}

Note that integer an atomic weight of a game does not necessarily imply that the options have integer atomic weights. Take $\aw (\gL)=\aw(\gR)=1/2$. Then, with notation as in Theorem~\ref{thm:AWcon}, $G=0$, and integer. 
The all-small games with integer atomic weights, and with followers of integer atomic weights, form a subgroup of the all-small games, say $A$. (This subgroup is a universe of games, although it is not parental, as defined in \cite{abs}.)

\begin{problem}\label{prob:2}
Find a universal ruleset in $A$.
\end{problem}
Since $\aw(*2)=\aw(*)=\aw(0)=0$, an integer, and $*2$ does not occur in \bipass, this ruleset is not universal in the sense of Problem \ref{prob:2}.

The ruleset {\sc maximal bipass} is as \bipass, but where every move must be the longest jump possible by the piece. 

\begin{problem}\label{prob:3}
Are the values of {\sc maximal bipass} disjunctive sums of star-based ordinal sum \cite{NM}? That is, for any game $g$ in {\sc maximal bipass}, 
are there games $g_1,g_2,\ldots,g_k$  such that $g=*:g_1+*:g_2 +\ldots +*:g_k$? (This is true for games of the form $\wh^m\bl^n$, $\wh^1\bl^n\wh^p\bl^q$.)
\end{problem}

The game of {\sc cannibal} \bip is as \bipass,\footnote{Inspried by the game {\sc cannibal clobber} \cite{Alt}.} but where an amoebae may instead of bypassing members of the other tribe, clobber (or eat) any number of neighbors of their own kind, playing the same direction as in \bipass. Thus, for example $\bl\bl\wh=\cg{\bl\wh,\bl\wh\bl}{\bl\wh\bl,\wh\bl\bl\,}$, $\wh\wh\bl\bl=\cg{\wh\wh+\bl}{\wh+\bl\bl\,}$, $\bl\bl\bl=\cg{\bl\bl,\bl+\bl,0}{\varnothing\,}$ and $\bl\wh\bl\bl=\cg{\wh\bl\bl\bl, \bl\wh+\bl}{\wh\bl\bl\bl\,}$. Single stones cannot be used, so for example $\bl +\bl =0$, and a single white stone to the left, where all other stones are black, cannot be used, so that $\wh\bl\bl\bl=\bl\bl\bl$. Some properties are immediate. A single strip may now decompose to a disjunctive sum of strips, and pieces remain alive, until the end of play, with a few exeptions as noted. And moreover the game is no more all-small, as for example $\bl\bl\wh \equiv \bl\bl =1$. For standard \bipass, it is beneficial to have fewer pieces. But now a large number of own pieces is beneficial. This variation contains non-trivial one-strip P-positions, such as $\bl\bl\wh\wh$ and $\bl\wh\wh\bl$. 
\begin{problem}\label{prob:4}
Is {\sc cannibal} \bip tepid, i.e. are all positions of the form a number + an infinitesimal? More precisely, for a disjunctive sum of games, $g$, is the value $\Delta(g)+\epsilon$? 
\end{problem}
If the value is $\Delta+\epsilon$, then the winner whenever $\Delta=0$ is determined by \bip play, because no player wants to start eating their own pieces.

\end{document}